\newtheorem{lemma}{Lemma}[section]
\newtheorem{theorem}[lemma]{Theorem}
\newtheorem{corollary}[lemma]{Corollary}
\newtheorem{proposition}[lemma]{Proposition}
\theoremstyle{definition}
\newtheorem{definition}[lemma]{Definition}
\newtheorem{remark}[lemma]{Remark}
\newtheorem{example}[lemma]{Example}
\newtheorem{exercise}[lemma]{Exercise}
\theoremstyle{remark}
\newtheorem*{remark*}{Remark}
\newtheorem*{note*}{Note}
\newcommand{\Chow}{\operatorname{Ch}}
\newcommand{\GL}{\operatorname{GL}}
\newcommand{\Supp}{\operatorname{Supp}}
\newcommand{\Spec}{\operatorname{Spec}}
\newcommand{\pt}{\operatorname{pt}}
\newcommand{\euler}{\chi}
\newcommand{\CC}{\mathbb{C}}
\newcommand{\ix}{\mathcal{X}}
\newcommand{\iy}{\mathcal{Y}}
\newcommand{\ZZ}{\mathbb{Z}}
\newcommand{\lieg}{\mathfrak{g}}
\newcommand{\QQ}{\mathbb{Q}}
\newcommand{\Mbar}{{\overline{\mathcal M }}}
\newcommand{\A}{\mathbb{A}}
\newcommand{\Pro}{\mathbb{P}}
\newcommand{\ch}{\operatorname{ch}}
\newcommand{\rk}{\operatorname{rk}}
\newcommand{\Td}{\operatorname{Td}}
\newcommand{\td}{\operatorname{td}}
\newcounter{item-counter}
\begin{document}
\title{Riemann-Roch for Deligne-Mumford stacks}

%    Information for first author
\author{Dan Edidin}
%    Address of record for the research reported here
\address{Department of Mathematics, University of Missouri-Columbia, Columbia, Missouri 65211}
\email{edidind@missouri.edu}

\date{October 1, 2012.}

\begin{abstract}
  We give a simple proof of the Riemann-Roch theorem for
  Deligne-Mumford stacks using the equivariant Riemann-Roch theorem
  and the localization theorem in equivariant $K$-theory, together with some
  basic commutative algebra of Artin local rings.
\end{abstract}

\maketitle

%\tableofcontents

\section{Introduction}
The Riemann-Roch theorem is one of the most important and deep
results in mathematics.  At its essence, the theorem gives a method to 
compute the dimension of the space of sections of a vector bundle on
a compact analytic manifold in terms of topological invariants (Chern
classes) of the bundle and manifold. 

Beginning with Riemann's inequality for linear systems on curves, work
on the Riemann-Roch problem spurred the development of fundamental
ideas in many branches of mathematics. In algebraic
geometry Grothendieck viewed the classical Riemann-Roch theorem as
an example  of a transformation between $K$-theory and Chow groups of a
smooth projective variety.  In differential geometry Atiyah and
Singer saw the classical theorem as a special case of their celebrated
index theorem which computes the index of an elliptic operator on a
compact manifold in terms of topological invariants.

Recent work in moduli theory has employed the Riemann-Roch theorem
on Deligne-Mumford stacks. A version of the
theorem for complex $V$-manifolds was proved by Kawasaki
\cite{Kaw:79} using index-theoretic methods. Toen \cite{Toe:99} 
also proved a version of Grothendieck-Riemann-Roch on Deligne-Mumford stacks
using cohomology theories with coefficients
in representations.
Unfortunately, both the statements and proofs that appear
in the literature are quite technical and
as a result somewhat inaccessible to many working in the field.  

The purpose of
this article is to state and prove a version of the Riemann-Roch
theorem for Deligne-Mumford stacks based on the equivariant 
Riemann-Roch theorem for schemes and the localization theorem in
equivariant $K$-theory. Our motivation is the belief that equivariant
methods give the simplest and least technical proof of the
theorem. The proof here is based on the author's joint work with
W. Graham \cite{EdGr:00, EdGr:03, EdGr:05} in equivariant intersection
theory and equivariant $K$-theory. It requires little more background
than some familiarity with Fulton's intersection theory \cite{Ful:84}
and its equivariant analogue developed in \cite{EdGr:98}.

The contents of this article are as follows. In Section \ref{sec.rrforschemes}
we review the algebraic development
of the Riemann-Roch theorem from its original statement for curves to
the version for arbitrary schemes proved by Baum, Fulton and
MacPherson. Our main reference for this materia, with
some slight notational changes, is Fulton's intersection theory book
\cite{Ful:84}. 

In Section \ref{sec.repgrothrr} we explain how
the equivariant Riemann-Roch theorem  \cite{EdGr:00}
easily yields a Grothendieck-Riemann-Roch theorem for representable
morphisms of smooth Deligne-Mumford stacks.

Section \ref{sec.hzrrdm} is the heart of the article. In it we prove
the Hirzebruch-Riemann-Roch theorem for smooth, complete Deligne-Mumford
stacks.  Using the example of the weighted projective line stack
$\Pro(1,2)$ as motivation, we first prove (Section \ref{sec.diaghzrr})
the result for quotient stacks of the form $[X/G]$ with $G$ diagonalizable. This
proof combines the equivariant Riemann-Roch theorem with the classical
localization theorem in equivariant $K$-theory and originally appeared
in \cite{EdGr:03}. In Section \ref{sec.hzrrdmarb} we explain how the
non-abelian localization theorem of \cite{EdGr:05} is used to obtain
the general result. We also include several computations to illustrate how
the theorem can be applied.

In Section \ref{sec.dmgrr} we briefly discuss the
Grothendieck-Riemann-Roch theorem for Deligne-Mumford stacks and
illustrate its use by computing the Todd class of a weighted
projective space.

For the convenience of the reader we also include an Appendix with some basic definitions used in the theory.

{\bf Acknowledgment:} The author is grateful to the referee for a very 
thorough reading of an earlier version of this article.

{\bf Dedication:} It is a pleasure to dedicate this article to my teacher, Joe Harris.

\section{The Riemann-Roch theorem for
  schemes} \label{sec.rrforschemes} The material in Sections
\ref{sec.hirzrr} - \ref{sec.fulrr} is well known and further details 
can be found in
the book \cite{Ful:84}.

\subsection{Riemann-Roch through Hirzebruch} \label{sec.hirzrr}
The original Riemann-Roch theorem is a statement about curves. If $D$ is a divisor on a smooth complete curve $C$ then the result can be stated as:
$$l(D) - l(K_C -D) = \deg D + 1 -g$$
where $K_C$ is the canonical divisor and $l(D)$ indicates the dimension of the linear series of effective divisors equivalent to $D$. 
Using Serre duality we can rewrite this as
$$\euler(C,L(D)) = \deg D + 1-g.$$
where $L(D)$ is the line bundle determined by $D$.
Or, in slightly fancier notation
\begin{equation} \label{eq.rrforcurves}
\euler(C,L(D)) = \deg c_1(L(D)) + 1-g.
\end{equation}

The Hirzebruch-Riemann-Roch theorem extends \eqref{eq.rrforcurves} to arbitrary 
smooth complete varieties.
\begin{theorem}[Hirzebruch-Riemann-Roch]
Let $X$ be a smooth projective variety and let $V$ be a vector bundle on $X$.
Then 
\begin{equation}
\euler(X,V) = \int_X \ch(E) \Td(X)
\end{equation}
where $\ch(V)$ is the Chern character of $V$, $\Td(X)$ is the Todd class of the tangent bundle and $\int_X$ is refers to the degree of the 0-dimensional component in the product.
\end{theorem}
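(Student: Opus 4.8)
The plan is to obtain Hirzebruch–Riemann–Roch as the special case of the Grothendieck–Riemann–Roch theorem applied to the structure morphism $\pi\colon X\to \Spec\CC$. First I would recall (or, as in the remainder of this section, set up) the statement of Grothendieck–Riemann–Roch for a proper morphism $f\colon X\to Y$ of smooth varieties: the square
\begin{equation*}
\begin{CD}
K_0(X) @>{\ch(-)\Td(T_X)}>> \Chow(X)_{\QQ}\\
@VV{f_!}V @VV{f_*}V\\
K_0(Y) @>{\ch(-)\Td(T_Y)}>> \Chow(Y)_{\QQ}
\end{CD}
\end{equation*}
commutes, where $f_![\mc{F}]=\sum_i(-1)^i[R^if_*\mc{F}]$ and $f_*$ denotes proper pushforward of cycle classes. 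Granting this, I would specialize to $Y=\Spec\CC$, a point: then $K_0(\Spec\CC)=\ZZ$, $\Chow(\Spec\CC)_{\QQ}=\QQ$, $\Td(T_Y)=1$, the bottom horizontal arrow is the inclusion $\ZZ\into\QQ$, $\pi_!$ sends $[V]$ to $\euler(X,V)$, and $\pi_*$ is the degree map $\int_X$. Reading off the commutativity of the square then yields exactly $\euler(X,V)=\int_X\ch(V)\Td(X)$, which is the assertion.

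Thus the real content is Grothendieck–Riemann–Roch itself, and the proof of that statement is the main obstacle. I would follow the standard reduction: factor a projective morphism $f$ as a closed immersion $i\colon X\into \Pro^N\times Y$ followed by the projection $p\colon \Pro^N\times Y\to Y$, and verify the identity for each factor, using that both sides are additive on short exact sequences of coherent sheaves and compatible with composition of morphisms. For the projection $p$ one reduces via the projective bundle formula to the tautological classes on $\Pro^N$, where the statement becomes a residue computation; for instance on $\Pro^n$ with $V=\mc{O}(d)$ one checks that $\int_{\Pro^n}e^{dh}\bigl(h/(1-e^{-h})\bigr)^{n+1}$ equals the coefficient of $h^n$ in $e^{dh}(h/(1-e^{-h}))^{n+1}$, which is $\binom{n+d}{n}=\euler(\Pro^n,\mc{O}(d))$. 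The harder factor is the closed immersion $i$: here one deforms to the normal cone to reduce to the zero-section embedding of $X$ into the total space of its normal bundle $N$, and the key computation is that the Koszul resolution of $i_*\mc{O}_X$ has Chern character $\lambda_{-1}(N^\vee)$, so that the ratio of this class against $\Td(N)$ supplies precisely the correction term needed for $i_*$ to intertwine the two Chern–character maps. This excess-intersection/Koszul identity is the technical heart of the argument.

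As an alternative I would mention Hirzebruch's original self-contained route: the "defect" $\euler(X,V)-\int_X\ch(V)\Td(X)$ is additive in $V$ and multiplicative in a suitable sense for products, so using the structure of the rational complex cobordism ring one reduces to checking the identity on products of projective spaces with line bundles, and finishes with the same residue computation as above. In the setting of this paper I would favor the first approach, since Grothendieck–Riemann–Roch for schemes is reviewed in the rest of this section in any case, and its specialization to a point is immediate.
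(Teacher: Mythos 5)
Your proposal is correct and matches the route the paper itself takes: this theorem is stated as review material with a pointer to Fulton, and the paper's own derivation (see the remark following Theorem \ref{thm.fultrr}) likewise obtains Hirzebruch--Riemann--Roch by specializing the covariance of the Riemann--Roch map $\tau_X$ for proper morphisms to the structure map $X \to \pt$, exactly as you do with the Grothendieck--Riemann--Roch square. Your further sketch of how Grothendieck--Riemann--Roch itself is proved (factorization through $\Pro^N \times Y$, deformation to the normal cone, the Koszul computation) is the standard argument from \cite{Ful:84} and is consistent with what the paper cites.
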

The Hirzebruch version of Riemann-Roch yields many useful
formulas. For example,  if $X$ is a smooth algebraic surface then the arithmetic
genus can be computed as 
\begin{equation} \label{eq.chisurface}
\euler(X,{\mathcal O}_X) = {1\over{12}} \int_X c_1^2 + c_2 = {1\over{12}}(K^2 + \chi)
\end{equation}
where $\chi$ is the topological genus.

\begin{comment}Perhaps even more importantly, Hirzebruch-Riemann-Roch is the ancestor
of some of the fundamental theorems in mathematics. In the analytic
direction, the Atiyah-Singer index theorem generalizes
\eqref{eq.chisurface} to an analogous formula for the index of a
differential operator on *get details*, and on the algebraic side Grothendieck proved a relative version, now called the Grothendieck-Riemann-Roch theorem.
\end{comment}
\subsection{The Grothendieck-Riemann-Roch theorem} \label{sec.grr}
The Grothendieck-Riemann-Roch theorem extends the Hirzebruch-Riemann-Roch
theorem to the relative setting. Rather than considering Euler
characteristics of vector bundles on smooth, complete varieties we consider the 
relative Euler characteristic for proper morphisms of smooth varieties.

Let $f \colon X \to Y$ be a proper morphism of smooth varieties. The Chern
character defines homomorphisms $\ch \colon K_0(X) \to \Chow^*X \otimes \QQ$, 
and $\ch \colon
K_0(Y) \to \Chow^*Y \otimes \QQ$. Likewise, there are two pushforward maps: the relative
Euler characteristic $f_* \colon K_0(X) \to K_0(Y)$ and proper pushforward
$f_* \colon \Chow^*(X) \to \Chow^*(Y)$.  Since we have 4 groups and 4 natural maps we obtain a diagram - which
which does not commute!
\begin{equation} \label{diag.nocommute}
\begin{array}{ccc}
K_0(X) & \stackrel{\ch} \to & \Chow^*(X) \otimes \QQ\\
f_* \downarrow & & f_* \downarrow \\
K_0(Y) & \stackrel{\ch} \to & \Chow^*(Y) \otimes \QQ
\end{array}
\end{equation}
The Grothendieck-Riemann-Roch theorem supplies the correction that makes
\eqref{diag.nocommute} commutative.  If $\alpha \in K_0(X)$
then
\begin{equation} \label{eq.grr}
\ch(f_*\alpha) \Td(Y) = f_*\left(\ch(\alpha) \Td(X) \right)\in 
\Chow^*(Y) \otimes \QQ.
\end{equation}
In other words the following diagram commutes:
\begin{equation} \label{diag.commute}
\begin{array}{ccc}
K_0(X) & \stackrel{\ch \Td(X)} \to & \Chow^*(X) \otimes \QQ\\
f_* \downarrow & & f_* \downarrow \\
K_0(Y) & \stackrel{\ch \Td(Y)} \to & \Chow^*(Y) \otimes \QQ
\end{array}
\end{equation}
Since $\Td(Y)$ is invertible in $\Chow^*(Y)$ we can rewrite equation
\eqref{eq.grr} as
\begin{equation} \label{eq.grr2}
\ch(f_*\alpha) = f_*\left(\ch(\alpha) \Td(T_f)\right)
\end{equation}
where $T_f= [TX] -[f^*TY] \in K_0(X)$ is the relative tangent bundle.

\begin{example}
Equation \eqref{eq.grr2} can be viewed as a relative version of the
Hirzebruch-Riemann-Roch formula, but it is also  more
general. For example, it can also be applied when $f \colon X \to Y$ is a regular
embedding of codimension $d$. 
In this case a more refined statement holds. If $N$ is the normal
bundle of $f$ and $V$ is a vector bundle of rank $r$ on $X$ then the equation
$$c(f_*V)) = 1 + f_*P(c_1(V), \ldots , c_r(V), c_1(N), \ldots c_d(N))$$
holds in $\Chow^*(Y)$
where $P(T_1, \ldots , T_d, U_1, \ldots , U_d)$ is a universal power series
with integer coefficients.

This result is known as Riemann-Roch without denominators and was conjectured
by Grothendieck and proved by Grothendieck and Jouanolou.
\end{example}

\subsection{Riemann-Roch for singular schemes} \label{sec.fulrr}
If $Z \subset X$ is a subvariety of codimension $k$ then
$\ch[{\mathcal O}_Z] = [Z] + \beta$ where $\beta$ is an element of
$\Chow^*(X)$ supported in codimension strictly greater than $k$.
Since $\Td(X)$ is invertible in $\Chow^*(X)$ the 
Grothendieck-Riemann-Roch theorem can be restated as follows:
\begin{theorem} \label{thm.grr2} The map $\tau_X \colon K_0(X) \to
  \Chow^*(X)\otimes \QQ$ defined by $[V] \mapsto \ch(V) \Td(X)$ 
is covariant for proper morphisms of
  smooth schemes\footnote{This means that if $f\colon X \to Y$ is a
    proper morphism of smooth schemes then $f_* \circ \tau_X = \tau_Y
    \circ f_*$ as maps $K_0(X) \to \Chow^*(Y) \otimes \QQ$.} and becomes an
  isomorphism after tensoring $K_0(X)$ with $\QQ$.
\end{theorem}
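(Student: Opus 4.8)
The plan is to deduce Theorem~\ref{thm.grr2} from the Grothendieck-Riemann-Roch theorem \eqref{eq.grr2} together with a separate verification that $\tau_X$ becomes an isomorphism rationally. The covariance statement is essentially a restatement of \eqref{eq.grr2}: given a proper morphism $f\colon X\to Y$ of smooth schemes and $\alpha\in K_0(X)$, the formula $\ch(f_*\alpha)\Td(Y)=f_*(\ch(\alpha)\Td(X))$ says precisely that $\tau_Y(f_*\alpha)=f_*(\tau_X(\alpha))$, so nothing further is needed there beyond noting that $\tau_X$ is a group homomorphism and that both pushforwards are additive. The only mild subtlety is that one should record the remark preceding the theorem, namely that $\ch([\mathcal O_Z])=[Z]+(\text{higher codimension})$, since this is what shows $\tau_X$ is compatible with the codimension filtrations and hence explains why the claimed isomorphism is even plausible; but it is not logically required for the bare statement.

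The substantive point is the isomorphism after $\otimes\QQ$. Here I would argue by devissage / induction on dimension, reducing the general smooth $X$ to the case of projective space (or to a point) where $\ch$ is classically known to be a rational isomorphism. Concretely: for $X$ smooth one can filter $K_0(X)$ by the codimension of support (the topological filtration), and on associated graded pieces $\tau_X$ agrees up to the leading term with the cycle class map $Z\mapsto[Z]$; since the Chern character only modifies a cycle by strictly higher-codimension terms, an upper-triangularity argument shows $\tau_X$ is a rational isomorphism once one knows the cycle map $K_0(X)\otimes\QQ\to\Chow^*(X)\otimes\QQ$ is an isomorphism. The latter is standard: both groups are generated by classes of subvarieties $Z$, one resolves singularities of each $Z$ to get $\widetilde Z\to Z\to X$, uses covariance to push forward the known statement on the smooth projective $\widetilde Z$, and concludes by Noetherian induction on dimension of supports.

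The main obstacle is the isomorphism claim rather than covariance, and within that, organizing the devissage cleanly: one must be careful that the filtration on $K_0(X)$ and the codimension filtration on $\Chow^*(X)\otimes\QQ$ are genuinely matched by $\tau_X$ (this uses the ``$\ch[\mathcal O_Z]=[Z]+\text{higher}$'' fact and multiplicativity of the Chern character together with invertibility of $\Td(X)$), and that resolution of singularities together with GRR-covariance indeed surjects onto each graded piece. I expect the author, however, to simply cite this as the Riemann-Roch theorem of Baum--Fulton--MacPherson as presented in \cite{Ful:84}, treating the isomorphism statement as known input and emphasizing only the covariance reformulation; so in practice the ``proof'' is a short derivation from \eqref{eq.grr2} plus a pointer to the singular Riemann-Roch machinery for the isomorphism.
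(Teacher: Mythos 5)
Your proposal matches the paper's treatment: the covariance is read off directly as a restatement of the Grothendieck--Riemann--Roch formula \eqref{eq.grr2}, and the isomorphism after tensoring with $\QQ$ rests on the fact, stated immediately before the theorem, that $\ch[{\mathcal O}_Z]=[Z]+\beta$ with $\beta$ supported in strictly higher codimension, together with the invertibility of $\Td(X)$ --- exactly the upper-triangularity input you identify. The paper gives no further argument and, as you anticipated, defers the details of the isomorphism to Fulton's book (\cite[Theorem 18.3, Corollary 18.3.2]{Ful:84}), so your fleshed-out devissage is simply a more explicit version of the same route.
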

The Riemann-Roch theorem of Baum, Fulton and MacPherson generalizes
previous Riemmann-Roch theorems to maps of arbitrary schemes. However,
the Grothendieck group of vector bundles $K_0(X)$ is replaced by the
Grothendieck group of coherent sheaves $G_0(X)$.
\begin{theorem} \label{thm.fultrr} \cite[Theorem 18.3, Corollary 18.3.2]{Ful:84}
  For all schemes $X$ there is a homomorphism $\tau_X \colon G_0(X) \to
  \Chow^*(X) \otimes \QQ$ satisfying the following properties:

(a) $\tau_X$ is covariant for proper morphisms.

(b) If $V$ is a vector bundle on $X$ then $\tau_X([V]) = \ch(V) \tau_X({\mathcal O}_X)$.

(c) If $f \colon X \to Y$ 
is an lci morphism with relative tangent bundle $T_f$ then for every class
$\alpha \in G_0(Y)$
$\tau_X f^*\alpha = \Td(T_f) \cap f^*\tau(\alpha)$.

(d) If $Z \subset X$ is a  subvariety of codimension $k$
then $\tau({\mathcal O}_Z) = [Z] + \beta$
where $\beta \in \Chow^*(X)$ is supported in codimension strictly greater than $k$.

(e) The map $\tau_X$ induces an isomorphism $G_0(X) \otimes \QQ \to \Chow^*(X) \otimes \QQ$.
\end{theorem}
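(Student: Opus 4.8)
The plan is to construct $\tau_X$ following Fulton's account of the Baum--Fulton--MacPherson theorem, via a \emph{localized Chern character}, and then to deduce properties (a)--(e) from the Grothendieck--Riemann--Roch theorem for smooth schemes (Theorem~\ref{thm.grr2}) together with one dimension induction.

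\textbf{Construction of $\tau_X$.} First I would reduce to the case where $X$ admits a closed embedding $i\colon X\into M$ into a smooth scheme: for quasi-projective $X$ one may take $M=\Pro^n$, and the general case follows from Chow's lemma and a blow-up/envelope descent argument, which I would treat as a separate standard reduction. Given such an embedding, a coherent sheaf $\mathcal{F}$ on $X$, regarded as a sheaf on $M$, admits a finite resolution $E_\bullet\to i_*\mathcal{F}$ by vector bundles because $M$ is smooth. MacPherson's graph construction applied to $E_\bullet$ yields a class $\ch^M_X(E_\bullet)\in\Chow^*(X)\otimes\QQ$ --- a Chern character ``localized on $X$'' --- and I would set
\[
\tau_X([\mathcal{F}]) \;=\; \ch^M_X(E_\bullet)\cap\bigl(\Td(T_M)\cap[M]\bigr)\ \in\ \Chow^*(X)\otimes\QQ .
\]
Two independence checks are then needed: that $\tau_X$ does not depend on the resolution $E_\bullet$ (homotopy invariance of the graph construction), and that it does not depend on $M$ (compare two embeddings through their graph inside $M_1\times M_2$ and use the behaviour of $\ch^M_X$ under the two projections). \emph{I expect this construction, and in particular proving the functoriality of the localized Chern character, to be the main technical obstacle}; once it is available the remaining properties are comparatively formal.

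\textbf{Properties (a)--(c).} Property (b) is immediate: replace $E_\bullet$ by $E_\bullet\otimes V$ and use multiplicativity of the Chern character. For covariance (a) I would factor a proper morphism $f\colon X\to Y$ as a closed immersion $X\into Y\times\Pro^n$ followed by the projection $\pi\colon Y\times\Pro^n\to Y$. The closed-immersion case follows from the construction by choosing compatible embeddings into smooth ambient schemes. For $\pi$, embed $Y$ in a smooth $M$, so that $Y\times\Pro^n\into M\times\Pro^n$ and $\pi$ is covered by the smooth, proper projection $M\times\Pro^n\to M$; then the identity $\pi_*\circ\tau=\tau\circ\pi_*$ follows from Grothendieck--Riemann--Roch for this smooth projection (Theorem~\ref{thm.grr2}), in its localized form. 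Property (c) follows in the same spirit from compatibility of $\ch^M_X$ with lci pullback --- regular embeddings via deformation to the normal cone, smooth morphisms directly --- together with multiplicativity of the Todd class, $\Td(T_f)=\Td(TX)\cdot f^*\Td(TY)^{-1}$.

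\textbf{Properties (d) and (e).} For (d), if $Z\subset X$ is integral of codimension $k$, a local computation of the graph construction for a vector-bundle resolution of $\mathcal{O}_Z$ identifies the leading (codimension-$k$) term of $\ch^M_X$ with the fundamental cycle $[Z]$, so $\tau_X([\mathcal{O}_Z])=[Z]+\beta$ with $\beta$ supported in dimension $<\dim Z$. Finally (e) is proved by Noetherian induction on $\dim X$. For a closed subvariety $j\colon W\into X$ with open complement $U$, the right-exact sequences
\[
G_0(W)\to G_0(X)\to G_0(U)\to 0,\qquad \Chow^*(W)\to\Chow^*(X)\to\Chow^*(U)\to 0
\]
are linked by $\tau$, using covariance (a) for $j_*$ and flat (open) pullback from (c) for restriction to $U$. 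Filtering $G_0(X)$ by dimension of support and invoking (d), the induced map on the top graded piece sends $[\mathcal{O}_V]$ to $[V]$ modulo lower-dimensional cycles, which on associated graded pieces is an isomorphism after $\otimes\QQ$; combining the inductive hypothesis for $W$ and $U$ with a diagram chase then yields the isomorphism (e).
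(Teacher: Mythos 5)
Your outline is correct and coincides with the proof the paper is pointing to: the statement is quoted from Fulton \cite[Theorem 18.3, Corollary 18.3.2]{Ful:84}, and the paper's only added remark is precisely the reduction to the quasi-projective/embeddable case via Chow's lemma that you take as your starting point. The construction via the localized Chern character from the graph construction, covariance by factoring through a projective bundle, the identification of the leading term of $\tau(\mathcal{O}_Z)$, and the filtration-by-dimension-of-support argument for the isomorphism in (e) are exactly the steps of Fulton's argument.
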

\begin{remark} 
  If $X$ is smooth then $K_0(X) = G_0(X)$ and using (c) we see that
  $\tau_X({\mathcal O}_X) = \Td(X)$ and thereby obtain the Hirzebruch and
  Grothendieck Riemann-Roch theorems.  In \cite{Ful:84} the Chow class
  $\tau_X({\mathcal O}_X)$ is called the {\em Todd class} of $X$.
\end{remark}
\begin{remark}
Theorem \ref{thm.fultrr} is proved by a reduction to the (quasi)-projective case
via Chow's lemma. Since Chow's lemma also holds for algebraic spaces, the 
Theorem immediately extends to the category of algebraic spaces.
\end{remark}

\section{Grothendieck Riemann-Roch for representable morphisms of quotient
Deligne-Mumford stacks} \label{sec.repgrothrr}
The goal of this section explain how the equivariant
Riemann-Roch theorem \ref{thm.equivariantrr} yields a
Grothendieck-Riemann-Roch theorem for {\em representable} morphisms of
Deligne-Mumford quotient stacks. 

\subsection{Equivariant Riemann-Roch} \label{sec.equivrr} If $G$ is an
algebraic group acting on a scheme $X$ then there are equivariant
versions of $K$-theory, Chow groups and Chern classes (see the
appendix for definitions). Thus it is natural to expect an equivariant
Riemann-Roch theorem relating equivariant $K$-theory with equivariant
Chow groups. Such a theorem was proved in \cite{EdGr:00} for the
arbitrary action of an algebraic group $G$ on a separated algebraic
space $X$. Before we state the equivariant Riemann-Roch theorem we
introduce some further notation.

The equivariant Grothendieck group of coherent sheaves, $G_0(G,X)$, is a
module for both $K_0(G,X)$, the Grothendieck ring of $G$-equivariant
vector bundles, and $R(G) = K_0(G,\pt)$, the Grothendieck ring of
$G$-modules. Each of these rings has a distinguished ideal, the
augmentation ideal, corresponding to virtual vector bundles
(resp. representations) of rank 0. A result of \cite{EdGr:00} shows
that the two augmentation ideals generate the same topology on
$G_0(G,X)$ and we denote by $\widehat{G_0(G,X)}$ the completion of
$G_0(G,X)_\QQ$ with respect to this topology.

The equivariant Riemann-Roch theorem generalizes Theorem \ref{thm.fultrr}
as follows:
\begin{theorem}\label{thm.equivariantrr}
There is a homomorphism $\tau_X \colon G_0(G,X) \to \prod_{i=0}^\infty \Chow^i_G(X)\otimes \QQ$ which factors through an isomorphism $\widehat{G_0(G,X)} \to \prod_{i=0}^\infty \Chow^i_G(X)\otimes \QQ$. The map $\tau_X$ is covariant for proper equivariant  morphisms
and when $X$ is a smooth scheme and $V$ is a vector bundle then
\begin{equation} \label{eq.equivchtd}
\tau_X(V) = \ch(V)\Td(TX -\lieg)
\end{equation} 
where $\lieg$ is the adjoint representation of $G$.
 \end{theorem}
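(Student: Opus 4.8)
The plan is to reduce Theorem~\ref{thm.equivariantrr} to the non-equivariant Riemann--Roch theorem of Baum--Fulton--MacPherson (Theorem~\ref{thm.fultrr}, together with its extension to algebraic spaces noted there) by applying it to algebraic models of the Borel construction. For a finite-dimensional $G$-representation $V$ and a $G$-invariant open $U\subseteq V$ on which $G$ acts freely with $c(V):=\codim_V(V\setminus U)$ large, the mixed space $X^V_G:=(X\times U)/G$ is a separated algebraic space and, by definition of equivariant Chow groups, $\Chow^j_G(X)=\Chow^j(X^V_G)$ for $j<c(V)$. Since $G$ acts freely on $X\times U$, equivariant coherent sheaves descend, so flat pullback along $X\times U\to X$ followed by descent gives a homomorphism $G_0(G,X)\to G_0(X^V_G)$, $\alpha\mapsto\alpha_V$; a $G$-equivariant vector bundle $W$ on $X$ likewise descends to a bundle $W_V$ on $X^V_G$, and the representation $V$ itself produces the tautological bundle $\mc{V}_G:=(X\times U\times V)/G$.

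I would then set $\tau^V_X(\alpha):=\Td(\mc{V}_G)^{-1}\cap\tau_{X^V_G}(\alpha_V)$, a class in $\prod_j\Chow^j(X^V_G)\otimes\QQ$ whose components of degree $<c(V)$ lie in $\prod_j\Chow^j_G(X)\otimes\QQ$. The first point to prove is that the classes $\tau^V_X(\alpha)$ are compatible as $V$ grows, so that they assemble into a single $\tau_X(\alpha)\in\prod_{i\ge 0}\Chow^i_G(X)\otimes\QQ$. For this one compares $V$ with $V\oplus V'$ by choosing for $V\oplus V'$ the open set $U\times V'$: then $X^{V\oplus V'}_G$ is exactly the total space of the vector bundle $p\colon\mc{V}'_G\to X^V_G$, one has $\alpha_{V\oplus V'}=p^*\alpha_V$ and $W_{V\oplus V'}=p^*W_V$, and applying Theorem~\ref{thm.fultrr}(c) to the smooth morphism $p$ (whose relative tangent bundle is $p^*\mc{V}'_G$) together with the projection formula shows that the extra factor $\Td(\mc{V}'_G)$ is exactly cancelled by the enlarged normalization $\Td(p^*\mc{V}_G\oplus p^*\mc{V}'_G)^{-1}$. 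Under the homotopy isomorphism $p^*\colon\Chow^j(X^V_G)\xrightarrow{\sim}\Chow^j(X^{V\oplus V'}_G)$ this gives $\tau^{V\oplus V'}_X=\tau^V_X$ in the common range, and a cofinality argument over all such $(V,U)$ makes $\tau_X$ a well-defined homomorphism.

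Covariance for a proper equivariant morphism $f\colon X\to Y$ is then formal: the induced map $f_G\colon X^V_G\to Y^V_G$ is proper, proper pushforward in $G$-theory corresponds to $(f_G)_*$ under descent, the tautological bundle on $Y^V_G$ pulls back to that on $X^V_G$, and Theorem~\ref{thm.fultrr}(a) with the projection formula give $\tau_Y f_*=f_*\tau_X$. For the Chern-character formula, assume $X$ smooth, so each $X^V_G$ is smooth; writing $\pi\colon X\times U\to X^V_G$ for the torsor, the sequence $0\to T_\pi\to T(X\times U)\to\pi^*TX^V_G\to 0$, together with $T(X\times U)=p_X^*TX\oplus p_U^*TU$, the identification $T_\pi\cong(X\times U)\times\lieg$ (adjoint action), and the triviality of $TU=U\times V$, yields in $K_0(X^V_G)$ the identity
\begin{equation*}
TX^V_G=\mc{T}+\mc{V}_G-\mc{A},
\end{equation*}
where $\mc{T}$ is the descent of $TX$ and $\mc{A}=(X\times U\times\lieg)/G$ is the bundle associated with the adjoint representation. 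Since $X^V_G$ is smooth, Theorem~\ref{thm.fultrr}(b) and the remark that $\tau_{X^V_G}(\mc{O}_{X^V_G})=\Td(X^V_G)$ give $\tau_{X^V_G}(W_V)=\ch(W_V)\Td(TX^V_G)$; dividing by $\Td(\mc{V}_G)$ leaves $\ch(W_V)\Td(\mc{T}-\mc{A})$, and since $\mc{T}$, $\mc{A}$, and $\ch(W_V)$ are by construction the Borel incarnations of $TX$, $\lieg$, and $\ch(W)$, this is the image of $\ch(W)\Td(TX-\lieg)$, which is \eqref{eq.equivchtd}.

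The step I expect to be the real obstacle is showing that $\tau_X$ factors through an isomorphism $\widehat{G_0(G,X)}\to\prod_i\Chow^i_G(X)\otimes\QQ$. For each $(V,U)$ the localization sequence identifies $G_0(X^V_G)$ with a quotient of $G_0(G,X)$, and Theorem~\ref{thm.fultrr}(e) gives a degreewise-finite isomorphism $G_0(X^V_G)\otimes\QQ\to\Chow^*(X^V_G)\otimes\QQ$; the substance is to pass to the limit over $(V,U)$. This needs: (i) the commutative-algebra input recalled above, namely that the filtration of $G_0(G,X)_\QQ$ by the kernels of the maps to the various $G_0(X^V_G)_\QQ$ is cofinal with the augmentation-adic filtration, so that the corresponding completion is exactly $\widehat{G_0(G,X)}$; (ii) a $\varprojlim^1$ vanishing argument, which is harmless here because on the Chow side the transition maps are, in each fixed degree, eventually isomorphisms, so the relevant pro-system is Mittag--Leffler; and (iii) the observation that $\prod_i\Chow^i_G(X)\otimes\QQ$ is the degreewise completion of $\bigoplus_i\Chow^i_G(X)\otimes\QQ$, which is precisely why completing the $K$-theory side is forced. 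Granting these bookkeeping points, the non-equivariant isomorphisms of Theorem~\ref{thm.fultrr}(e) assemble into the asserted one.
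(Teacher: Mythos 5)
The paper itself does not prove Theorem~\ref{thm.equivariantrr}: it imports the statement wholesale from \cite{EdGr:00}, so the only fair comparison is with that reference. Measured against it, your construction of the map is the right one and is essentially the one used there: build the mixed spaces $X^V_G=(X\times U)/G$, descend $\alpha$ and apply the Baum--Fulton--MacPherson map (extended to algebraic spaces, as in Remark~2.6), and normalize by $\Td(\mc{V}_G)^{-1}$ so that the double-fibration comparison of pairs $(V,U)$ makes the classes stabilize degree by degree. Your verification of covariance and your derivation of $\tau_X(V)=\ch(V)\Td(TX-\lieg)$ from the torsor sequence $TX^V_G=\mc{T}+\mc{V}_G-\mc{A}$ are both correct and complete.

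The genuine gap is in the last paragraph. Your item (i) --- that the filtration of $G_0(G,X)_\QQ$ by the kernels $K_V=\ker\bigl(G_0(G,X)_\QQ\to G_0(X^V_G)_\QQ\bigr)$ defines the same topology as the augmentation-adic one --- is not ``commutative-algebra input recalled above'' (nothing above supplies it), and it is not bookkeeping: it is the rational algebraic Atiyah--Segal completion theorem, i.e.\ the main theorem of \cite{EdGr:00}. One inclusion is indeed easy: an element of the augmentation ideal acts on $G_0(X^V_G)_\QQ\cong\Chow^*(X^V_G)_\QQ$ through a Chern character with vanishing degree-zero term, hence nilpotently of order at most $\dim X^V_G+1$, so $I^N G_0(G,X)_\QQ\subseteq K_V$ for $N$ large. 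The reverse inclusion --- that $K_V$ is eventually contained in arbitrarily high powers of $I$ (up to localization at $I$) --- requires real control of the $R(G)_\QQ$-module structure of $G_0(G,X)_\QQ$; in \cite{EdGr:00} it is obtained by reducing from general $G$ to a maximal torus and then arguing explicitly for diagonalizable groups. Without it you cannot identify $\varprojlim_V G_0(X^V_G)_\QQ$ with $\widehat{G_0(G,X)}$, and hence cannot conclude either injectivity or surjectivity of the completed map, however well-behaved the Chow-side pro-system is. So the proposal correctly reconstructs the map and the two ``formal'' assertions of the theorem, but the isomorphism statement --- the actual content --- is assumed rather than proved.
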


\begin{remark} The $K$-theory class $TX - \lieg$ appearing \eqref{eq.equivchtd}
corresponds to the tangent bundle of the quotient stack $[X/G]$. If $G$ is finite
then $\lieg = 0$ and if $G$ is diagonalizable (or more generally solvable)
then $\lieg$ is a trivial representation of $G$ and the formula
$\tau_X(V)= \ch(V) \Td(TX)$ also holds.
\end{remark}

\begin{example}
If $X = \pt$ and $G = \CC^*$ then $R(G)$ is the representation ring of
$G$. Since $G$ is diagonalizable the representation ring is generated by
characters and $R(G) = \ZZ[\xi,\xi^{-1}]$ where $\xi$ is the character of weight one. If we set $t = c_1(\xi)$ then the map $\tau_X$ is simply the exponential
map $\ZZ[\xi,\xi^{-1}] \to \QQ[[t]]$, $\xi \mapsto e^t$.
The augmentation ideal of $R(G)$ is ${\mathfrak m} = (\xi -1)$.
If we tensor with $\QQ$ and complete at the ideal ${\mathfrak m}$
then the completed ring $\widehat{R(G)}$ is isomorphic to the power series
ring $\QQ[[x]]$ where $x = \xi -1$.  The map $\tau_X$ is the isomorphism
sending $x$ to $e^t-1= t(1 + t/2 + t^2/3! + \ldots )$.
\end{example}

\subsection{Quotient stacks and moduli spaces}
\begin{definition}
  A quotient stack is a stack ${\mathcal X}$ equivalent to the
  quotient $[X/G]$ where $G \subset \GL_n$ is a linear algebraic group
  and $X$ is a scheme (or more generally an algebraic space\footnote{The
    fact that $X$ is an algebraic space as opposed to a scheme makes
    little difference in this theory.}).

A quotient stack is Deligne-Mumford if the stabilizer of every point
is finite and geometrically reduced. Note that in characteristic 0 the second
condition is automatic.

A quotient stack $\ix = [X/G]$ is {\em separated} if the action of $G$
on $X$ is proper - that is, the map $\sigma \colon G \times X \to X
\times X$, $(g,x) \mapsto (gx,x)$ is proper. Since $G$ is affine
$\sigma$ is proper if and only if it is finite. In characteristic 0 any separated quotient stack is automatically a Deligne-Mumford stack.
\end{definition}

The hypothesis that a Deligne-Mumford stack is a quotient stack is
not particularly restrictive. Indeed, the author does not know any
example of a separated Deligne-Mumford stack which is not a quotient
stack. Moreover, there are a number of general results which show that
``most'' Deligne-Mumford stacks are quotient stacks
\cite{EHKV:01,KrVi:04}. For example if ${\mathcal X}$ satisfies the
resolution property - that is, every coherent sheaf is the quotient of
a locally free sheaf then ${\mathcal X}$ is quotient stack.

It is important to distinguish two classes of morphisms of
Deligne-Mumford stacks, {\em representable} and {\em
  non-representable} morphisms. Roughly speaking, a morphism of Deligne-Mumford
stacks $\ix \to {\mathcal Y}$ is representable if the fibers of $f$
are schemes. Any morphism $X' \to \ix$ from a scheme to a
Deligne-Mumford stack is representable. If $\ix = [X/G]$ and $\iy =
[Y/H]$ are quotient stacks and $f \colon \ix \to \iy$ is representable
then $\ix$ is equivalent to a quotient $[Z/H]$ (where $Z =
Y\times_{\iy} \ix$ )and the map of stacks $\ix \to \iy$ is induced by
an $H$-equivariant morphism $Z \to Y$. Thus, for quotient stacks we
may think of representable morphisms as those corresponding to
$G$-equivariant morphisms.

The non-representable morphisms that we will encounter are all
morphisms from a Deligne-Mumford stack to a scheme or algebraic
space. Specifically we consider the structure map from a
Deligne-Mumford stack to a point and the map from a stack to its
coarse moduli space.

Every Deligne-Mumford stack ${\mathcal X}$ is {\em finitely
  parametrized}.  This means that there is finite surjective morphism
$X' \to \ix$ where $X$ is a scheme. Thus we can say that a separated
stack $\ix$ is {\em complete} if it is finitely parametrized by a complete scheme.

A deep result of Keel and Mori \cite{KeMo:97} implies that every
separated Deligne-Mumford stack $\ix$ has a {\em coarse moduli} space
$M$ in the category of algebraic spaces. Roughly speaking, this means
that there is a proper surjective (but not representable) morphism $p
\colon \ix \to M$ which is a bijection on geometric points and
satisfies the universal property that any morphism $\ix \to M'$ with
$M'$ an algebraic space must factor through $p$. When $\ix = [X/G]$
then the coarse moduli space $M$ is the geometric quotient in the
category of algebraic spaces. When $X=X^s$ is the set of stable points
for the action of a reductive group $G$ then $M$ is the geometric
invariant theory quotient of \cite{MFK:94}.

The map $\ix \to M$ is not finite in the usual scheme-theoretic sense,
because it is not representable, but it behaves like a finite morphism
in the sense that if $f\colon X' \to \ix$ is a finite parametrization
then the composite morphism $X' \to M$ is finite. Note, however, that
if we define $\deg p$ by requiring $\deg p \deg f =\deg X'/M$ then
$\deg p$ may be fractional (see below).

Since $p$ is a bijection on geometric points, some of the geometry of
the stack $\ix$ can be understood by studying the coarse space $M$.
Note, however, that when $\ix$ is smooth the
space $M$ will in general have finite quotient singularities.

\subsubsection{$K$-theory and Chow groups of quotient stacks}
If $\ix$ is a stack then we use the notation $K_0(\ix)$ to denote the
Grothendieck group of vector bundles on $\ix$ and we denote by
$G_0(\ix)$ the Grothendieck group of coherent sheaves on $\ix$. If $\ix$
is smooth and has the resolution property then the natural map $K_0(\ix)
\to G_0(\ix)$ is an isomorphism. 

If $\ix = [X/G]$ then $K_0(\ix)$ (resp. $G_0(\ix)$) 
is naturally identified with the equivariant Grothendieck ring $K_0(G,X)$
(resp. equivariant Grothendieck group $G_0(G,X)$. 

Chow groups of Deligne-Mumford stacks were defined with rational
coefficients by Gillet \cite{Gil:84} and Vistoli \cite{Vis:89}
and with integral coefficients
by Kresch \cite{Kre:99}.  When $\ix = [X/G]$ Kresch's Chow groups agree
integrally with the equivariant Chow groups $\Chow^*_G(X)$ defined in
\cite{EdGr:98}. The proper pushforward of rational Chow groups $p
\colon \Chow^*(\ix) \otimes \QQ \to \Chow^*(M)\otimes \QQ$ is an always an
isomorphism \cite{Vis:89,EdGr:98}. In particular this means that if $\ix = [X/G]$
is a Deligne-Mumford stack then every equivariant Chow 
class can be represented by a $G$-invariant cycle
on $X$ (as opposed to $X \times {\mathbf V}$ where ${\mathbf V}$ is a representation of $G$).
Consequently $\Chow^k(\ix) \otimes \QQ = 0$ for $k > \dim \ix$.

The theory of Chern classes in equivariant intersection theory implies that
a vector bundle $V$ on $\ix = [X/G]$ has
Chern classes $c_i(V)$ which operate on $\Chow^*(\ix)$. If $\ix$ is smooth
then we may again view the Chern classes as elements of $\Chow^*(\ix)$.
If $\ix$ is smooth and Deligne-Mumford 
the Chern character and Todd class are again maps
$ K_0(\ix) \to \Chow^*(\ix)\otimes \QQ$. 

Every smooth Deligne-Mumford stack has a tangent bundle. If $\ix =
[X/G]$ is a quotient stack then the map $X \to [X/G]$ is a $G$-torsor
so the tangent bundle to $\ix$ corresponds to the quotient $TX/\lieg$
where $\lieg$ is the adjoint representation of $G$.  
In particular under the
identification of $\Chow^*(\ix) = \Chow^*_G(X)$, $c(T\ix) =
c(TX)c(\lieg)^{-1}$. If $G$ is finite or diagonalizable then $\lieg$ is a trivial representation so
$c_t(\lieg) = 1$. Thus, the Chern classes of $T\ix$ are just the
equivariant Chern classes of $TX$ in these cases.

\subsubsection{Restatement of the  equivariant Riemann-Roch theorem for Deligne-Mumford quotient stacks}

As already noted, when $G$ acts properly then $\Chow^i_G(X)_\QQ = 0$
for $i > \dim [X/G]$ so the infinite direct product in Theorem
\ref{thm.equivariantrr} is just $\Chow^*(\ix)$ where $\ix = [X/G]$. A
more subtle fact proved in \cite{EdGr:00} is that if $G$ acts with
finite stabilizers (in particular if the action is proper) then
$G_0(G,X) \otimes \QQ$ is supported at a finite number of points 
of $\Spec (R(G) \otimes \QQ)$. It follows that
$\widehat{G_0(G,X)}$ is the same as the localization of the
$R(G)\otimes \QQ$-module $G_0(G,X)\otimes \QQ$ at the augmentation
ideal in $R(G) \otimes \QQ$. For reasons that will become clear in the
next section we denote this localization by $G_0(G,X)_1$ (or
$K_0(G,X)_1$). Identifying equivariant $K$-theory with the $K$-theory
of the stack $\ix = [X/G]$ we will also write $K_0(\ix)_1$ and
$G_0(\ix)_1$ respectively.  Theorem \ref{thm.equivariantrr} implies
the following result about smooth Deligne-Mumford quotient stacks.

\begin{theorem} \label{thm.representableDMrr}
There is a homomorphism $\tau_X \colon G_0(\ix)  \to \Chow^*(\ix)\otimes \QQ$ 
which factors through an isomorphism $G_0(\ix)_1 \to \Chow^*(\ix)\otimes \QQ$. The map $\tau_X$ is covariant for proper representable  morphisms
and when $\ix$ is a smooth and $V$ is a vector bundle then
\begin{equation} \label{eq.equivchtd}
\tau_X(V) = \ch(V)\Td(\ix)
\end{equation} 
\end{theorem}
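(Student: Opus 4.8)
The plan is to deduce this statement directly from the equivariant Riemann-Roch theorem (Theorem \ref{thm.equivariantrr}) by identifying the two completions appearing there with the corresponding objects attached to the quotient stack $\ix = [X/G]$. Write $\ix = [X/G]$ with $G \subseteq \GL_n$ and the $G$-action on $X$ proper. First I would recall the two dictionary facts already assembled in the preceding subsections: $G_0(\ix) = G_0(G,X)$ and $\Chow^*(\ix)\otimes\QQ = \Chow^*_G(X)\otimes\QQ$ (Kresch vs.\ equivariant Chow), and moreover that properness of the action forces $\Chow^i_G(X)_\QQ = 0$ for $i > \dim\ix$, so the infinite product $\prod_{i\ge 0}\Chow^i_G(X)\otimes\QQ$ appearing in Theorem \ref{thm.equivariantrr} collapses to the finite sum $\Chow^*(\ix)\otimes\QQ$. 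Thus the target group is already the right one with no completion needed.

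The second step is to identify the source. Here I invoke the fact, also quoted from \cite{EdGr:00}, that when $G$ acts with finite stabilizers the $R(G)\otimes\QQ$-module $G_0(G,X)\otimes\QQ$ is supported at finitely many closed points of $\Spec(R(G)\otimes\QQ)$, and that the augmentation-ideal topology used to form $\widehat{G_0(G,X)}$ therefore agrees with the topology coming from localization at the augmentation ideal $\mathfrak{m}_1 \subset R(G)\otimes\QQ$. Consequently $\widehat{G_0(G,X)} = (G_0(G,X)\otimes\QQ)_{\mathfrak{m}_1}$, which is by definition $G_0(\ix)_1$. Feeding these two identifications into Theorem \ref{thm.equivariantrr} immediately gives a homomorphism $\tau_X\colon G_0(\ix)\to\Chow^*(\ix)\otimes\QQ$ factoring through an isomorphism $G_0(\ix)_1 \xrightarrow{\sim} \Chow^*(\ix)\otimes\QQ$, and covariance for proper representable morphisms follows from covariance for proper equivariant morphisms in Theorem \ref{thm.equivariantrr} together with the observation (made in the subsection on quotient stacks and moduli) that a proper representable morphism of quotient stacks is induced by a proper $G$-equivariant morphism of the presenting spaces.

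It remains to verify the normalization $\tau_X(V) = \ch(V)\Td(\ix)$ for a vector bundle $V$ on a smooth $\ix$. By \eqref{eq.equivchtd} in Theorem \ref{thm.equivariantrr}, $\tau_X(V) = \ch(V)\Td(TX - \lieg)$ in $\Chow^*_G(X)\otimes\QQ$. Since $TX - \lieg$ is precisely the $K$-theory class of the tangent bundle of the quotient stack $[X/G]$ (the map $X\to[X/G]$ being a $G$-torsor), we have $\Td(TX-\lieg) = \Td(\ix)$ under the identification $\Chow^*(\ix) = \Chow^*_G(X)$, and the formula follows; in the diagonalizable or finite case one may further note $\lieg$ is a trivial representation so $\Td(TX-\lieg) = \Td(TX)$. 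I expect the only genuinely non-formal input to be the support-finiteness/comparison-of-topologies fact from \cite{EdGr:00} that lets the completion be replaced by a single localization — but this is quoted from the literature rather than reproved, so within the scope of this section the argument is essentially a translation exercise. The main thing to be careful about is keeping the two notions of "tangent bundle" ($TX - \lieg$ equivariantly versus $T\ix$ on the stack) and the two Chow theories properly aligned.
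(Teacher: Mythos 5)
Your proposal is correct and follows essentially the same route as the paper: the paper justifies Theorem \ref{thm.representableDMrr} in the paragraph immediately preceding it by exactly the same three observations --- the vanishing $\Chow^i_G(X)_\QQ=0$ for $i>\dim[X/G]$ collapses the infinite product, the finite-support result of \cite{EdGr:00} identifies the completion $\widehat{G_0(G,X)}$ with the localization $G_0(\ix)_1$ at the augmentation ideal, and representable proper morphisms correspond to proper equivariant morphisms, with the normalization coming from $T\ix = TX - \lieg$. No substantive differences to report.
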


\section{Hirzebruch Riemann-Roch for quotient Deligne-Mumford
stacks} \label{sec.hzrrdm}
At first glance, Theorem \ref{thm.representableDMrr} looks like the
end of the Riemann-Roch story for Deligne-Mumford stacks, since it
gives a stack-theoretic version of the Grothendieck-Riemann-Roch
theorem for representable morphisms and also explains the relationship
between $K$-theory and Chow groups of a quotient stack. Unfortunately,
the theorem cannot be directly used to compute the Euler characteristic of
vector bundles or coherent sheaves on complete Deligne-Mumford stacks.

The problem is that the Euler characteristic of a vector bundle $V$ on
$\ix$ is the $K$-theoretic direct image $f_!V := \sum
(-1)^iH^i(\ix,V)$ under the projection map $f \colon K_0(\ix) \to
K_0(\pt)=\ZZ$. However, the projection map $\ix \to \pt$ is not
representable - since if it were then $\ix$ would be a scheme or
algebraic space.

A Hirzebruch-Riemann-Roch theorem for a smooth, complete,
Deligne-Mumford stack $\ix$ should be a formula for the Euler characteristic
of a bundle in terms of degrees of Chern characters and Todd classes.
In this section, which is the heart of the paper, we show how to use
Theorem \ref{thm.representableDMrr} and 
generalizations of the localization
theorem in equivariant $K$-theory to obtain such a
result.  Henceforth, we will  work exclusively 
over the complex
numbers $\CC$.

\subsection{Euler characteristics and degrees of 0-cycles}
If $V$ is a coherent sheaf on  $\ix = [X/G]$ then the cohomology groups of
$V$ are representations of $G$
and we make the following definition.
\begin{definition} \label{def.dmeuler}
If $V$ is a $G$-equivariant vector bundle  on $X$ then
Euler characteristic of $V$ viewed as a bundle on ${\mathcal X} =[X/G]$
is $\sum_i (-1)^i \dim H^i(X,V)^G$
where $H^i(X,V)^G$ denotes the invariant subspace. We denote this by $\euler(\ix,V)$. 
\end{definition}

Note that, if $\dim G > 0$ then $X$ will never be complete, so
$H^i(X,V)$ need not be finite dimensional. Nevertheless, if $\ix$ is
complete then $H^i(X,V)^G$ is finite dimensional as it can be
identified with the cohomology of the coherent sheaf $H^i(M,p_*E)$
under the proper morphism $p \colon {\mathcal X} \to M$ from $\ix$ to
its coarse moduli space.

If $G$ is linearly reductive (for example if $G$ is diagonalizable)
then the cohomology group $H^i(X,V)$ decomposes as direct sum of
$G$-modules and $H^i(X,V)^G$ is the trivial summand. In this case it
easily follows that the assignment $V \mapsto \sum_i(-1)^i \dim H^i(X,V)^G$
defines an Euler characteristic homomorphism $K_0(G,X) \to
\ZZ$. The identification of vector bundles on ${\mathcal X}$ with
$G$-equivariant bundles on $X$ yields an Euler characteristic map
$\chi \colon K_0(\ix)\to \ZZ$. When the action of $G$ is free and
${\mathcal X}$ is represented by a scheme, this is the usual Euler
characteristic.

However, even if $G$ is not reductive but acts properly on $X$ then
the assignment $V \mapsto \sum_i (-1)^i \dim H^i(X,V)^G$
still defines an Euler characteristic map $\chi
\colon K_0(\ix) \to \ZZ$.  This follows from Keel and Mori's description
of the finite map $[X/G] \to M=X/G$ as being \'etale locally in $M$ a
quotient $[V/H] \to V/H$ where $V$ is affine and $H$ is finite (and
hence reductive because we work in characteristic 0).

The above reasoning also applies to $G$-linearized coherent sheaves on $X$ 
and we also obtain an Euler characteristic map $\chi \colon G_0(\ix) \to \ZZ$.
These maps can be extended by linearity to maps $\chi \colon K_0(\ix) \otimes F
\to F$ (resp. $G_0(\ix) \otimes F \to F$) where $F$ is any coefficient ring.

\begin{example}
  If  $G$ is a finite group let $BG = [\pt/G]$ be the classifying
  stack parametrizing algebraic $G$ coverings.  
The identity morphism $pt \to pt$ factors as $pt \to
  BG \to pt$ where the first map is the universal $G$-covering and
  which associates to any scheme $T$ the trivial covering $G \times T
  \to T$. The map $BG \to pt$ is the coarse moduli space map and
  associates to any $G$-torsor $Z \to T$ to the ground scheme $T$.

  The map $\pt \to BG$ is representable and the
  pushforward in map $K_0(\pt) \to K_0(BG)$ is the map $\ZZ \to R(G)$
  which sends the a vector space $V$ to the representation $V \otimes
  \CC[G]$ where $\CC[G]$ 
  is the regular representation of $G$.

  Since the $\CC[G]$ contains a copy of the trivial representation
  with multiplicity one, it follows that, with our definition, the
  composition of pushforwards $\ZZ =K_0(pt) \to R(G) = K_0(BG) \to \ZZ =
  K_0(pt)$ is the identity - as expected.
\end{example}

\subsubsection{The degree of a 0-cycle}
Some care is required in understanding $0$-cycles on a Deligne-Mumford
stack.  The reason is that a closed $0$-dimensional integral substack $\eta$ is not
in general a closed point but rather a {\em gerbe}. That is, it is
isomorphic after \'etale base change to $BG$ for some finite group $G$. Assuming that the ground field is algebraically closed then the 
degree of $[\eta]$ is defined to be $1/|G|$. 

If $\ix = [X/G]$ is a complete Deligne-Mumford 
quotient stack then $0$-dimensional integral substacks
correspond to $G$-orbits of closed points and we can define for a closed point
$x \in X$
$\deg [Gx/G] = 1/|G_x|$ where $G_x$ is the stabilizer of $x$.

\begin{example}
The necessity of dividing by the order the stabilizer can be
seen by again looking at the factorization of the  morphism $\pt \to BG \to \pt$
when $G$ is a finite group. The map $\pt \to BG$ has degree $|G|$ so the map
$BG \to \pt$ must have degree ${1\over{|G|}}$.
\end{example}

\subsection{Hirzebruch Riemann-Roch for quotients by diagonalizable groups}
\label{sec.diaghzrr}
The goal of this section is to understand the Riemann-Roch theorem in
an important special case: separated Deligne-Mumford stacks of the
form ${\mathcal X} = [X/G]$ where $X$ is a smooth variety and $G
\subset (\CC^*)^n$ is a diagonalizable group. We will develop the
theory using a very simple example - the weighted projective line
stack $\Pro(1,2)$.

\subsubsection{Example: The weighted projective line stack
  $\Pro(1,2)$, Part I} \label{sec.p12I} Consider the weighted
projective line stack $\Pro(1,2)$. This stack is defined as the
quotient of $[\A^2 \smallsetminus \{0\}/\CC^*]$ where $\CC^*$ acts
with weights $(1,2)$; i.e., $\lambda(v_0,v_1) = (\lambda v_0,
\lambda^2 v_1)$. Because $X=\A^2 \smallsetminus \{0\}$ is an open set
in a two-dimensional representation, every equivariant vector bundle on $X$ is of the
form $X \times V$ where $V$ is a representation of $\CC^*$. In this
example we consider two line bundles on $\Pro(1,2)$ - the line bundle
$L$ associated to the weight one character $\xi$ of $\CC^*$ and the
line bundle ${\mathcal O}$ associated to the trivial character. 

{\bf Direct calculation of $\chi(\Pro(1,2), {\mathcal O})$ and
  $\chi(\Pro(1,2), L)$:} It is easy to compute $\chi(\Pro(1,2), L)$
and $\chi(\Pro(1,2), {\mathcal O})$ directly.  The coarse moduli space
of $\Pro(1,2)$ is the geometric quotient $(\A^2 \smallsetminus
\{0\})/\CC^*$. Even though $\CC^*$ no longer acts freely the quotient
is still $\Pro^1$ since it has a covering by two affines $\Spec
\CC[x_0^2/x_1]$ and $\Spec \CC[x_1/x_0^2]$, where $x_0$ and $x_1$ are
the coordinate functions on $\A^2$.  The Euler characteristic
pushforward $K_0(\Pro(1,2)) \to K_0(\pt) = \ZZ$ factors through the
proper pushforward $K_0(\Pro(1,2)) \to K_0(\Pro^1)$. Consequently, we
can compute $\chi(\Pro(1,2),L)$ and $\chi(\Pro(1,2), {\mathcal O})$ by
identifying the images of these bundles on $\Pro^1$. A direct
computation using the standard covering of $\A^2 \smallsetminus \{0\}$
by $\CC^*$ invariant affines shows that both $L$ and ${\mathcal O}$
pushforward to the trivial bundle on $\Pro^1$. Hence
$$\chi(\Pro(1,2),L)=
\chi(\Pro(1,2), {\mathcal O}) = 1$$

{\bf An attempt to calculate $\chi(\Pro(1,2), {\mathcal O})$ and
  $\chi(\Pro(1,2), L)$ using Riemann-Roch methods:} Following
Hirzebruch-Riemann-Roch for smooth varieties we might expect to compute $\euler(\Pro(1,2),
L)$ as $\int_{\Pro(1,2)} \ch(L) \Td(\Pro(1,2))$. To do that we will
use the presentation of $\Pro(1,2)$ as a quotient by $\CC^*$.  The
line bundle $L$ corresponds to the pullback to $\A^2$ of the standard
character $\xi$ of $\CC^*$ and the tangent bundle to the stack
$\Pro(1,2)$ fits into a weighted Euler sequence
$$0 \to {\mathbf 1} \to \xi + \xi^2  \to  T\Pro(1,2) \to  0$$ where ${\mathbf 1}$
denotes the trivial character of $\CC^*$ and again $\xi$ is the character of 
$\CC^*$ of weight 1. If we let $t  = c_1(\xi)$ then
$$\ch(L) \Td(\Pro(1,2)) = (1 + t)(1 + 3t/2) = 1+ 5t/2$$
Now the Chow class $t$ is represented by the invariant cycle $[x=0]$ on $\A^2$
and the corresponding point of $\Pro(1,2)$ has stabilizer of order 2.
Thus 
$$\int_{\Pro(1,2)} \ch(L) \Td(\Pro(1,2)) = 1/2(5/2)= 5/4$$
which is 1/4 too big.
On the other a hand 
then again $\euler(\Pro(1,2), {\mathcal O})= 1$ but
$$\int_{\Pro(1,2)} \ch({\mathcal O}) \Td(\Pro(1,2)) = 3/4$$
is too small by 1/4.
In particular
\begin{equation} \label{eq.whodunnit}
 \int_{\Pro(1,2)} \ch({\mathcal O} +L) \Td(\Pro(1,2)) =2
\end{equation}
which is indeed equal to $\chi(\Pro(1,2), {\mathcal O} + L)$.

Equation \eqref{eq.whodunnit} may seem unremarkable but is in fact a hint as to how to obtain a Riemann-Roch formula that works for all bundles on $\Pro(1,2)$.

\subsubsection{The support of equivariant $K$-theory} \label{sec.supp}
To understand why \eqref{eq.whodunnit} holds
we need to study $K_0(\Pro(1,2)$ as an $R(\CC^*)$-module. Precisely,
$$K_0(\Pro(1,2)) = K_0(\CC^*,\A^2 \smallsetminus \{0\}) =
\ZZ[\xi,\xi^{-1}]/(\xi^2-1)(\xi-1).$$ This follows from the fact that $\A^2$ is
a representation of $\CC^*$ so $K_0(\CC^*,\A^2) = R(\CC^*) =
\ZZ[\xi,\xi^{-1}]$ where again $\xi$  denotes the weight one character of
$\CC^*$. Because
we delete the origin we must quotient by the ideal generated by the
$K$-theoretic Euler class of the tangent space to the origin. With our
action, $\A^2$ is the representation $\xi+ \xi^2$ 
so the tangent space of the origin is also $\xi + \xi^2$. The Euler class of this representation is
$(1 - \xi^{-1})(1  - \xi^{-2})$ which 
generates the ideal $(\xi^2 -1)(\xi-1)$.

From the above description we see that $K_0(\CC^*,\A^2\smallsetminus
\{0\}) \otimes \CC$ is an Artin ring supported at the points $1$ and
$-1$ of $\Spec R(G) \otimes \CC = \CC^*$.  The vector bundle
${\mathcal O} + L$ on $\Pro(1,2)$ corresponding to the element $1 +
\xi \in R(\CC^*)$ is supported at $1\in \CC^*$ and the formula
$$\chi(\Pro(1,2), {\mathcal O} + L) = \int_{\Pro(1,2)}(\ch({\mathcal O} + L)\Td(\Pro(1,2))$$ is correct.  On the
other hand the class of the  bundle ${\mathcal O}$
decomposes as $ [{\mathcal O}]_{1} + [{\mathcal O}]_{-1}$ where
$[\mathcal O]_{1}= 1/2(1 + \xi)$ is supported at $1$
and $[\mathcal O]_{-1} = 1/2(1 - \xi)$ is
supported at $-1$.  In this case the integral $\int_{\Pro(1,2)}
\ch({\mathcal O})\Td(\Pro(1,2))$ computes $\euler(\Pro(1,2),[{\mathcal
  O}]_{1})$.

This phenomenon is general. 
If $\alpha \in K_0(G,X)\otimes \QQ$, denote by $\alpha_{1}$ the component supported
at the augmentation ideal of $R(G)$. 
\begin{corollary} \label{cor.firststep}\cite[cf. Proof of Theorem
  6.8]{EdGr:05} Let $G$ be a linear algebraic group (not necessarily diagonalizable) acting properly on smooth variety $X$. Then if
  $\alpha \in K_0(\ix) \otimes \QQ$
\begin{equation} \label{eq.onepiece}
\int_{{\mathcal X}} \ch(\alpha) \Td({\mathcal X}) = \euler(\ix, \alpha_{1}).
\end{equation}
\end{corollary}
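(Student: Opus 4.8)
The plan is to deduce the statement from the Riemann-Roch theorem of Baum, Fulton and MacPherson (Theorem~\ref{thm.fultrr}) applied to an ordinary scheme that finitely parametrizes $\ix=[X/G]$. First I would reduce to the identity component. Since $X$ is smooth, $K_0(\ix)=G_0(\ix)$, and by Theorem~\ref{thm.representableDMrr} the assignment $\alpha\mapsto\ch(\alpha)\Td(\ix)$ is the Riemann-Roch homomorphism $\tau_{\ix}$, which factors through the localization $K_0(\ix)_1$. Thus $\ch(\alpha)\Td(\ix)=\ch(\alpha_1)\Td(\ix)$, so the left-hand side of \eqref{eq.onepiece} depends only on $\alpha_1$, and it suffices to prove
\[
\int_{\ix}\ch(\beta)\Td(\ix)=\euler(\ix,\beta)\qquad\text{for every }\beta\in K_0(\ix)_1 .
\]

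The main step is to show that every such $\beta$ is a proper representable pushforward from a scheme. Choose a finite parametrization $h\colon X'\to\ix$ with $X'$ a complete scheme; then $h$ is representable, since its source is a scheme, and proper, being finite, so $\tau$ is covariant for $h$ (Theorem~\ref{thm.representableDMrr}) and the pushforward $h_!\colon G_0(X')\otimes\QQ\to K_0(\ix)\otimes\QQ$ is $R(G)$-linear by the projection formula. I claim $\im h_!=K_0(\ix)_1$. For the inclusion ``$\subseteq$'': the isomorphism $\tau_{X'}$ intertwines the $R(G)$-action on $G_0(X')\otimes\QQ$ with cap product against Chern characters of classes pulled back from $BG$ (Theorem~\ref{thm.fultrr}(b)), so the augmentation ideal, consisting of rank-zero classes, acts through operators of strictly positive codimension, hence nilpotently on $\Chow^*(X')\otimes\QQ$, which vanishes above degree $\dim X'=\dim\ix$; therefore the augmentation ideal is also nilpotent on $\im h_!$, and since $K_0(\ix)\otimes\QQ$ is supported at finitely many points of $\Spec R(G)$, any such submodule lies in $K_0(\ix)_1$. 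For the inclusion ``$\supseteq$'': covariance gives $\tau_{\ix}\circ h_!=h_*\circ\tau_{X'}$; here $\tau_{X'}$ is an isomorphism (Theorem~\ref{thm.fultrr}(e)), the Chow pushforward $h_*\colon\Chow^*(X')\otimes\QQ\to\Chow^*(\ix)\otimes\QQ$ is surjective (the composite $X'\to\ix\to M$ onto the coarse space is finite and surjective, hence surjective on rational Chow, and $\Chow^*(\ix)\otimes\QQ\cong\Chow^*(M)\otimes\QQ$), and $\tau_{\ix}$ restricts to an isomorphism $K_0(\ix)_1\cong\Chow^*(\ix)\otimes\QQ$; combining these, $h_!$ maps onto all of $K_0(\ix)_1$. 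I expect this middle step to be the real obstacle: one needs simultaneously that schemes mapping to $\ix$ only ``see'' the identity part of equivariant $K$-theory and that, rationally, they see all of it, the latter resting on the isomorphism $\Chow^*(\ix)\otimes\QQ\cong\Chow^*(M)\otimes\QQ$ and on surjectivity of proper pushforward on rational Chow groups.

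To conclude, write a given $\beta\in K_0(\ix)_1$ as $\beta=h_!\gamma$ with $\gamma\in G_0(X')\otimes\QQ$. Since $h$ is finite, $R^ih_*=0$ for $i>0$, so composing $K$-theoretic pushforwards along $X'\to\ix\to\pt$ yields $\euler(\ix,\beta)=\euler(X',\gamma)$. On the other hand, covariance of $\tau$ for $h$ together with the fact that proper pushforward preserves degrees of $0$-cycles gives
\[
\int_{\ix}\ch(\beta)\Td(\ix)=\int_{\ix}h_*\bigl(\tau_{X'}(\gamma)\bigr)=\int_{X'}\tau_{X'}(\gamma),
\]
and Theorem~\ref{thm.fultrr}(a), applied to the structure map $X'\to\pt$, identifies the last integral with $\euler(X',\gamma)$. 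Equating the two computations of $\euler(X',\gamma)$ proves the corollary. The first and last steps are formal bookkeeping with $\tau$, Euler characteristics, and degrees of $0$-cycles; all of the geometric content is in the surjectivity assertion of the middle step.
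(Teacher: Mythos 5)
Your proposal is correct and follows essentially the same route as the paper: reduce to $\alpha\in K_0(\ix)_1$, use a finite parametrization $X'\to\ix$ by a complete scheme, show via surjectivity of the rational Chow pushforward and the Riemann--Roch isomorphisms that every class in $K_0(\ix)_1$ is a pushforward from $X'$ (this is the paper's Proposition~\ref{cor.inthemiddle} combined with the observation that $G_0(X')\otimes\QQ$ is supported at the augmentation ideal), and then conclude by covariance of $\tau$ and the Baum--Fulton--MacPherson theorem on $X'$. The only cosmetic difference is that you establish the ``image lands in $K_0(\ix)_1$'' half by a nilpotence argument rather than by citing that $G$ acts freely on the parametrizing space, but the content is the same.
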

\begin{proof}
Since the equivariant Chern character map factors through $K_0(G,X)_1$
it suffices to prove that
\begin{equation} \label{eq.onetwopiece}
\int_{{\mathcal X}} \ch(\alpha) \Td({\mathcal X}) = \euler(\ix, \alpha)
\end{equation}
for $\alpha \in K_0(G,X)_1$.  To prove our result we use the fact that
every Deligne-Mumford stack $\ix$ is finitely
parametrizable. Translated in terms of group actions this means
that there is a finite, surjective $G$-equivariant morphism $X' \to X$
such that $G$ acts freely on $X'$ and the quotient $\ix' = [X'/G]$ is
represented by a scheme. (This result was first proved by Seshadri in
\cite{Ses:72} and is the basis for the finite parametrization theorem
for stacks proved in \cite{EHKV:01}.) The scheme $X'$ is in general
singular\footnote{If the quotient $X/G$ is quasi-projective then a
  result of Kresch and Vistoli \cite{KrVi:04} shows that we can take
  $X'$ to be smooth, but this is not necessary for our purposes.}, but
the equivariant Riemann-Roch theorem implies the following proposition.
\begin{proposition} \label{cor.inthemiddle}
Let $G$ act properly on $X$ and 
let $f \colon X' \to X$ be a finite surjective $G$-equivariant map.
Then 
the proper pushforward  $f_*
\colon G_0(G,X') \to G_0(G,X)$ induces a surjection 
$G_0(G,X')_1
\to G_0(G,X)_1$, where $G_0(G,X)_1$ (resp. $G_0(G,X)_1$) denotes the localization of $G_0(G,X)\otimes \QQ$ (resp. $G_0(G,X')\otimes \QQ$) 
at the augmentation ideal of $R(G) \otimes \QQ$.
\end{proposition}
\begin{proof}[Proof of Proposition \ref{cor.inthemiddle}]
Because $G$ acts properly on $X$ and $X' \to X$ is finite (hence proper) it
follows that $G$ acts properly on $X'$. Thus $\Chow^*_G(X')\otimes \QQ$
and $\Chow^*_G(X) \otimes \QQ$ are generated by $G$-invariant cycles.
Since $f$ is finite and surjective any $G$-invariant cycle on $X$ is
the direct image of some rational $G$-invariant cycle on $X'$; i.e.,
the pushforward of Chow groups $f_* \colon \Chow^*_G(X') \to \Chow^*_G(X)$
is surjective after tensoring with $\QQ$. Hence by Theorem 
\ref{thm.representableDMrr} the corresponding map 
$f_* \colon G_0(G,X')_1 \to G_0(G,X)_1$ is also surjective.
\end{proof}

Now $G$ acts freely on $X'$ so $G_0(G,X') \otimes \QQ$ is supported entirely at
the augmentation ideal of $R(G) \otimes \QQ$. Therefore we have a surjection
$G_0(G,X')\otimes \QQ \to G_0(G,X)_1$. Since $X$ is smooth, we can
also identify $K_0(G,X)_1$ with $G_0(G,X)_1$ and express the class
$\alpha \in K_0(G,X)_1$ as $\alpha = f_* \beta$.  Since $f$ is finite
we see that $\euler(\ix',\alpha) = \euler(\ix, \beta)$. Since $\ix'$
is a scheme, we know by the Riemann-Roch theorem for the singular
schemes that $\euler(\ix', \beta) = \int_{\ix'}\tau_{\ix'}(\beta)$.  Applying
the covariance of the equivariant Riemann-Roch map for proper
equivariant morphisms we conclude that
$$\int_{\ix} \ch(\alpha)\Td(\ix) = \int_{\ix'} \tau_{\ix'}(\beta) = \euler(\ix,\beta) = \euler(\ix,\alpha).$$
\end{proof}

\subsubsection{The localization theorem in equivariant $K$-theory}

Corollary \ref{cor.firststep} tells us how to deal with the component
of $G_0(G,X)$ supported 
at the augmentation ideal. We now turn to the problem of understanding what to do with the rest of equivariant $K$-theory. The key tool is the {\em localization theorem}. 

The correspondence between diagonalizable groups and finitely
generated abelian groups implies that if $G$ is a complex
diagonalizable group then $R(G) \otimes \CC$ is the coordinate ring of $G$.
Since the $R(G) \otimes \QQ$-module $G_0(G,X) \otimes \QQ$ is supported at a
finite number of closed points of $\Spec R(G) \otimes \QQ$ it follows
that $G_0(G,X) \otimes \CC$ is also supported at a finite number of
closed points of $G = \Spec R(G) \otimes \CC$.  If $h \in G$ then we
denote by $G_0(G,X)_h$ the localization of $G_0(G,X)\otimes \CC$ at the
corresponding maximal ideal of $R(G) \otimes \CC$. In the course of 
the proof of \cite[Theorem 2.1]{Tho:92} Thomason showed 
that $G_0(G,X)_h=0$ if $h$ acts without fixed point on
$X$. Hence $h \in \Supp G_0(G,X)$ implies that $X^h \neq
\emptyset$. Since $G$ is assumed to act with finite stabilizers
(because it acts properly) it follows that $h$ must be of finite order
if $h \in \Supp G_0(G,X)$.

If $X$ is a smooth scheme then we can identify $G_0(G,X) = K_0(G,X)$ and the discussion of the above paragraph applies to the Grothendieck ring of vector bundles.

Let $X^h$ be the fixed locus of $h \in
G$. If $X$ is smooth then $X^h$ is a smooth closed subvariety of $X$ so the inclusion
$i_h \colon X^h \to X$ is a regular embedding.  Since the map $i_h$ is
$G$-invariant the normal bundle $N_h$ of $X^h \to X$ comes with a natural $G$-action. The key to understanding what happens to the summand $G_0(G,X)_h$ is the localization theorem:
\begin{theorem}
  Let $G$ be a diagonalizable group acting on a smooth variety $X$.
  The pullback $i_h^* \colon G_0(G,X) \to G_0(G,X^h)$ is an isomorphism
  after tensoring with $\CC$ and localizing at $h$. Moreover, the Euler
  class of the normal bundle, $\lambda_{-1}(N_h^*)$, is invertible
  in $G_0(G,X^h)_h$ and if $\alpha \in G_0(G,X)$ then
$$\alpha = (i_h)_*\left( {i_h^*\alpha \over{\lambda_{-1}(N^*_h)}}\right)$$
\end{theorem}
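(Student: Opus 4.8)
The plan is to prove the localization theorem in three stages, following the standard Thomason-style argument specialized to the diagonalizable case. First I would establish the isomorphism $i_h^* \colon G_0(G,X)_h \to G_0(G,X^h)_h$. The key geometric input is that for a diagonalizable group $G$ and $h \in G$ of finite order, the fixed locus $X^h$ is smooth and closed, and the ``moving part'' of any $G$-equivariant structure near $X^h$ is invertible after localizing at $h$. Concretely, one uses the self-intersection formula: $i_h^* (i_h)_* \beta = \lambda_{-1}(N_h^*) \cdot \beta$ for $\beta \in G_0(G,X^h)$, where $\lambda_{-1}(N_h^*) = \sum (-1)^j [\Lambda^j N_h^*]$. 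So the first substep is to show $\lambda_{-1}(N_h^*)$ is invertible in $G_0(G,X^h)_h$; the second is to show $(i_h)_*$ is surjective after localizing at $h$ (equivalently, that $G_0(G,X)_h$ is generated by classes supported on $X^h$); together these give that $i_h^*$ and $(i_h)_*$ are mutually inverse up to the unit $\lambda_{-1}(N_h^*)$, yielding both the isomorphism and the displayed formula.

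For the invertibility of $\lambda_{-1}(N_h^*)$, I would argue as follows. Since $G$ is diagonalizable, the normal bundle $N_h$ decomposes (as a $G$-equivariant bundle on $X^h$) into a direct sum $\bigoplus_\chi N_\chi$ of eigensubbundles indexed by characters $\chi$ of $G$, and because $X^h$ is precisely the $h$-fixed locus, \emph{no} character $\chi$ occurring in $N_h$ satisfies $\chi(h) = 1$. Then $\lambda_{-1}(N_\chi^*) = \sum_{j=0}^{\operatorname{rk} N_\chi} (-1)^j [\Lambda^j N_\chi^*]$, and evaluating the ``leading term'' under the character, this class maps to something whose value at $h$ is $(1 - \chi(h)^{-1})^{\operatorname{rk} N_\chi}$ plus nilpotent corrections coming from the positive Chern classes. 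Since $\chi(h) \neq 1$, this is a unit in the residue field at $h$, hence $\lambda_{-1}(N_\chi^*)$ is a unit in the local ring $G_0(G,X^h)_h$ (one needs here that $G_0(G,X^h) \otimes \CC$ localized at $h$ is a local ring with nilpotent maximal ideal modulo the residue field, which follows from the finite-support statement established earlier in the excerpt via \cite{EdGr:00}). Taking the product over all $\chi$ gives invertibility of $\lambda_{-1}(N_h^*)$.

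For the surjectivity of $(i_h)_*$ after localizing at $h$, the cleanest route is deformation to the normal cone (or Thomason's filtration argument): filter $X$ by the order of vanishing along $X^h$, or use the blowup $\Bl_{X^h} X$, to reduce to the case of a vector bundle over $X^h$ retracting onto its zero section; there the statement that $G_0$ localized at $h$ is supported on the zero section is again the eigenspace-decomposition computation, since the $K$-theory Euler class of the non-fixed part of the bundle becomes invertible. Alternatively, and perhaps more in the spirit of this paper, one can cite that this is exactly the content of \cite[Theorem 2.1]{Tho:92} in the form already invoked in the excerpt — Thomason proved $G_0(G,X)_h = 0$ when $X^h = \emptyset$, and the relative version over $X^h$ follows by applying this to the complement $X \smallsetminus X^h$ and using the localization exact sequence $G_0(G,X^h) \to G_0(G,X) \to G_0(G,X \smallsetminus X^h) \to 0$, noting the last term vanishes after localizing at $h$.

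I expect the main obstacle to be the invertibility argument for $\lambda_{-1}(N_h^*)$ done carefully: one must be precise about the structure of the local ring $G_0(G,X^h)_h \otimes \CC$, specifically that modulo its unique maximal ideal one recovers evaluation of characters at $h$, so that an element is a unit exactly when its ``constant term'' $\prod_\chi (1-\chi(h)^{-1})^{\operatorname{rk} N_\chi}$ is nonzero. This requires knowing that higher Chern class contributions to $\lambda_{-1}(N_\chi^*)$ land in the maximal ideal, which in turn uses that $\Chow^{>0}_G(X^h) \otimes \QQ$ is nilpotent in the relevant completed/localized ring — a consequence of the finiteness results cited before Theorem \ref{thm.equivariantrr}. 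Once that bookkeeping is in place, assembling the self-intersection formula into the final displayed equation is routine: write $\alpha = (i_h)_* \gamma$ by surjectivity, apply $i_h^*$ to get $i_h^* \alpha = \lambda_{-1}(N_h^*) \cdot \gamma$, solve $\gamma = i_h^* \alpha / \lambda_{-1}(N_h^*)$, and substitute back.
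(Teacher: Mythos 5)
The paper offers no proof of this statement: it is quoted from the literature, with the remark immediately following attributing it to Segal \cite{Seg:68b} and, in the form stated, to \cite[Lemma 3.2]{Tho:92}. Your outline reconstructs essentially the argument of those references, and its architecture is correct: surjectivity of $(i_h)_*$ after localizing at $h$ follows from the localization sequence $G_0(G,X^h)\to G_0(G,X)\to G_0(G,X\smallsetminus X^h)\to 0$ together with Thomason's vanishing $G_0(G,U)_h=0$ when $U^h=\emptyset$, and combining this with the self-intersection formula $i_h^*(i_h)_*\beta=\lambda_{-1}(N_h^*)\cdot\beta$ and the invertibility of $\lambda_{-1}(N_h^*)$ yields both the isomorphism and the displayed identity exactly as you say. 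Two points deserve sharpening. First, the eigenbundle decomposition of $N_h$ is taken with respect to the closed subgroup $H=\overline{\langle h\rangle}$, which acts trivially on $X^h$ and hence fiberwise on $N_h$; the isotypic pieces are $G$-subbundles because $G$ is abelian, and the defining property of $X^h$ gives $\chi(h)\neq 1$ for every $\chi$ that occurs --- your phrasing ``characters of $G$'' should be read this way. Second, and more substantively, your proposed justification for why the higher-order terms of $\lambda_{-1}(N_\chi^*)$ lie in the radical leans on the equivariant Riemann--Roch isomorphism, but that isomorphism only controls the completion at the augmentation ideal, i.e.\ the localization at $1$, and says nothing directly about the localization at $h\neq 1$ (moving between the two is exactly what the twisting operator $t_h$, introduced only afterwards, accomplishes). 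The standard repair is elementary: by the splitting principle reduce to line bundles, write each factor as $1-[L^*]\chi^{-1}$ with $1-\chi(h)^{-1}\neq 0$, and use that $[L^*]-1$ is nilpotent because the kernel of the rank map on $K_0$ of a finite-dimensional noetherian scheme is nilpotent. With that substitution your sketch is a complete and correct proof of the quoted theorem.
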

\begin{remark}
  The localization theorem in equivariant $K$-theory was originally
  proved by Segal in \cite{Seg:68b}. The version stated above is essentially
\cite[Lemma 3.2]{Tho:92}.
\end{remark}

\subsubsection{Hirzebruch-Riemann-Roch for diagonalizable group actions}
The localization theorem implies that if $\alpha \in G_0(G,X)_h$ then
$$\euler(\ix, \alpha) = \euler([X^h/G],\frac{i_h^*\alpha}
{\lambda_{-1}N^*_h}).$$ Thus if $\alpha \in G_0(G,X)_h$ then we can 
compute $\euler([X/G],\alpha)$ by restricting to the fixed locus $X^h$.
This is advantageous because there is an 
automorphism of $G_0(G,X^h)$ which moves the component of a $K$-theory class
supported at $h$ to the component supported at $1$ without changing
the Euler characteristic.

\begin{definition}
  Let $V$ be a $G$-equivariant vector bundle on a space $Y$ and suppose
  that an element $h \in G$ of finite order acts trivially on $Y$. Let $H$ be the cyclic group
generated by $h$ and let $X(H)$ be its character group.
Then $V$ 
decomposes into a sum of $h$-eigenbundles  $\oplus_{\xi\in X(H)}V_\xi$
for the action of $H$ on the fibres of $V \to Y$. Because the action
of $H$ commutes with the action of $G$ (since $G$ is abelian) each
eigenbundle is a $G$-equivariant vector bundle.
Define  $t_h([V]) \in K_0(G,Y) \otimes \CC$ to be the class of the virtual
bundle 
$\sum_{\xi \in X(H)} \xi(h) V_\xi$.  
A similar construction for $G$-linearized coherent sheaves defines an automorphism $t_h \colon G_0(G,Y) \otimes \CC
\to G_0(G,Y) \otimes \CC$.
\end{definition}

The map $t_h$ is compatible with the automorphism of $R(G) \otimes \CC$ 
induced by
the translation map $G \to G$, $k \mapsto kh$ and maps the localization
$K_0(G,Y)_h$ to the localization
$K_0(G,Y)_1$.
The analogous statement
also holds for the corresponding localizations of
$G_0(G,Y)\otimes\CC$.

The crucial property of $t_h$ is that it preserves invariants.
\begin{proposition} \label{prop.thatsit}
If $G$ acts properly on $Y$
and $Y/G$  is complete then 
$\euler([Y/G], \beta) = \euler([Y/G], t_h(\beta))$.
\end{proposition}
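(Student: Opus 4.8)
The plan is to reduce the statement to the case where $Y$ is a point (or a disjoint union of gerbes), where it becomes an elementary character computation, and to effect this reduction using the étale-local structure of the coarse moduli map supplied by Keel--Mori. First I would recall that $H$, the cyclic group generated by $h$, is finite and acts trivially on $Y$, so that $[Y/G]$ carries an action of $H$ that is ``inert'': the eigenbundle decomposition $V = \oplus_{\xi \in X(H)} V_\xi$ is $G$-equivariant, and $\euler([Y/G], V) = \sum_\xi \euler([Y/G], V_\xi)$ by additivity of $\euler$ on $K_0(\ix)$. Since $t_h$ also respects this decomposition, sending $V_\xi$ to $\xi(h) V_\xi$, it suffices to prove that for each fixed character $\xi \in X(H)$ one has $\euler([Y/G], V_\xi) = \xi(h)\,\euler([Y/G], V_\xi)$ --- in other words, that $\euler([Y/G], V_\xi) = 0$ whenever $\xi(h) \neq 1$, i.e.\ whenever $\xi$ is a nontrivial character of $H$.

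The key point is therefore: \emph{if $h$ acts trivially on $Y$ and $V$ is an eigenbundle on which $h$ acts by a nontrivial scalar $\xi(h) \neq 1$, then $\euler([Y/G], V) = 0$.} To see this I would use that $h$ lies in the center of $G$ (here $G$ is diagonalizable, hence abelian), so $h$ acts on every cohomology group $H^i(Y, V)$, and because $h$ acts on the fibers of $V$ by the scalar $\xi(h)$ and trivially on the base $Y$, it acts on all of $H^i(Y,V)$ by the same scalar $\xi(h)$. But $\euler([Y/G], V) = \sum_i (-1)^i \dim H^i(Y,V)^G$ picks out the $G$-invariants, and since $h \in G$ acts on $H^i(Y,V)$ by the scalar $\xi(h) \neq 1$, the invariant subspace $H^i(Y,V)^G$ must vanish for every $i$. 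Hence $\euler([Y/G], V) = 0$, as desired. (When $\dim G > 0$ one invokes, exactly as in the discussion preceding Definition~\ref{def.dmeuler}, the Keel--Mori description of $[Y/G] \to Y/G$ as étale-locally a quotient $[W/H'] \to W/H'$ with $W$ affine and $H'$ finite reductive, so that $H^i(Y,V)^G$ is computed by a coherent sheaf on the complete space $Y/G$ and the scalar-action argument localizes to each étale chart; completeness of $Y/G$ guarantees these groups are finite-dimensional so the alternating sum makes sense.)

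Assembling the pieces: writing $\beta = \sum_\xi \beta_\xi$ for the eigen-decomposition of an arbitrary class $\beta \in G_0([Y/G]) \otimes \CC$ under the inert $H$-action, we get $t_h(\beta) = \sum_\xi \xi(h)\beta_\xi$, and by the vanishing above $\euler([Y/G], \beta_\xi) = \xi(h)\euler([Y/G],\beta_\xi)$ for every $\xi$ (trivially when $\xi(h)=1$, and because both sides vanish when $\xi(h)\neq 1$). Summing over $\xi$ gives $\euler([Y/G], t_h(\beta)) = \sum_\xi \xi(h)\euler([Y/G],\beta_\xi) = \sum_\xi \euler([Y/G],\beta_\xi) = \euler([Y/G],\beta)$. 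The same argument applies verbatim to $G_0$ in place of $K_0$, since the eigen-decomposition and the centrality-of-$h$ argument are insensitive to whether we work with bundles or coherent sheaves.

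The step I expect to be the main obstacle is making the ``$h$ acts by a scalar on $H^i(Y,V)$'' argument fully rigorous when $G$ is not reductive and $Y$ is non-complete: one cannot simply take $G$-invariants of cohomology naively, and must genuinely pass through the coarse-space description to know that $\euler([Y/G], -)$ is a well-defined Euler characteristic homomorphism compatible with the $H$-grading. Once that bookkeeping is in place the scalar computation is immediate, so the heart of the proof is really the identification $H^i(Y,V)^G = 0$ for a nontrivial eigenbundle, which is where the hypothesis that $h$ acts \emph{trivially on $Y$} --- and hence by a fixed scalar on each cohomology group --- does all the work.
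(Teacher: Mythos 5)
Your proof is correct and follows essentially the same route as the paper: both arguments rest on the observation that only the $H$-weight-zero summand of $V$ can contribute $G$-invariants (you phrase this as the vanishing of $H^i(Y,V_\xi)^G$ for $\xi(h)\neq 1$, since $h\in G$ acts there by the nonunit scalar $\xi(h)$), while $t_h$ acts as the identity on that summand. The extra care you take with the Keel--Mori local description for non-reductive $G$ is consistent with the paper's earlier discussion and does not change the argument.
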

\begin{proof}
  Observe that if $V = \oplus_{\xi \in X(H)} V_\xi$ then the invariant
  subbundle $V^G$ is contained in the $H$-weight 0 submodule of
  $V$. Since $t_h(E)$ fixes the 0 weight submodule we see that the
  invariants are preserved.
\end{proof}

Combining the localization theorem with Proposition \ref{prop.thatsit} 
we obtain the Hirzebruch-Riemann-Roch theorem for actions of
diagonalizable groups.

\begin{theorem} \label{thm.diagrr}\cite[cf. Theorem 3.1]{EdGr:03}
Let $G$ be a diagonalizable group acting properly on smooth variety $X$
such that the quotient stack $\ix =[X/G]$ is complete.
Then if $V$ is an equivariant vector bundle on $X$
\begin{equation}
\euler(\ix,V) = \sum_{h \in \Supp K_0(G,X)} \int_{[X^h/G]} 
\ch\left(t_h(\frac{i_h^*V}{\lambda_{-1}(N_h^*)})\right)\Td([X^h/G]).
\end{equation}
\end{theorem}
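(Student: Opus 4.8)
The plan is to assemble the theorem from three ingredients already in place: the decomposition of equivariant $K$-theory into local pieces, Corollary \ref{cor.firststep}, and the localization theorem together with Proposition \ref{prop.thatsit}. First I would recall that since $G$ is diagonalizable acting properly on the smooth variety $X$, the $R(G)\otimes\CC$-module $K_0(G,X)\otimes\CC$ is an Artinian ring supported at a finite set of closed points of $G = \Spec R(G)\otimes\CC$, each necessarily of finite order (by Thomason's vanishing result quoted above, $h\in\Supp K_0(G,X)$ forces $X^h\neq\emptyset$, and properness forces $h$ of finite order). Hence we get a canonical decomposition $[V] = \sum_{h\in\Supp K_0(G,X)} [V]_h$ into components supported at the individual points $h$, and by additivity of the Euler characteristic map $\chi\colon K_0(\ix)\otimes\CC\to\CC$ we have $\euler(\ix,V) = \sum_h \euler(\ix,[V]_h)$.

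Next I would treat a single summand $\euler(\ix,[V]_h)$. Applying the localization theorem to the regular embedding $i_h\colon X^h\into X$, the class $[V]_h$ pulls back to $i_h^*[V]/\lambda_{-1}(N_h^*)\in G_0(G,X^h)_h$ and pushes forward to recover $[V]_h$; since $i_h$ is proper and representable (it is a $G$-equivariant morphism of schemes) and $\euler$ is covariant for such morphisms, we get $\euler(\ix,[V]_h) = \euler([X^h/G], i_h^*[V]/\lambda_{-1}(N_h^*))$, with the right-hand class localized at $h$. Now apply the automorphism $t_h$: it carries the localization at $h$ to the localization at the augmentation ideal, so $t_h(i_h^*[V]/\lambda_{-1}(N_h^*))$ is supported at $1$, and by Proposition \ref{prop.thatsit} — here is where completeness of $[X^h/G]$, which follows from completeness of $\ix$ since $X^h$ is closed and $G$-invariant, gets used — the Euler characteristic is unchanged: $\euler([X^h/G], i_h^*[V]/\lambda_{-1}(N_h^*)) = \euler([X^h/G], t_h(i_h^*[V]/\lambda_{-1}(N_h^*)))$.

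Finally, since $t_h(i_h^*[V]/\lambda_{-1}(N_h^*))$ is supported at the augmentation ideal of $R(G)$, Corollary \ref{cor.firststep} (equation \eqref{eq.onepiece}) applies to the stack $[X^h/G]$ and converts this Euler characteristic into an integral: $\euler([X^h/G], t_h(\cdots)) = \int_{[X^h/G]} \ch(t_h(i_h^*V/\lambda_{-1}(N_h^*)))\,\Td([X^h/G])$. Summing over $h\in\Supp K_0(G,X)$ yields the stated formula. I expect the main obstacle to be bookkeeping rather than conceptual: one must check carefully that $\lambda_{-1}(N_h^*)$ is genuinely invertible in the localized ring $G_0(G,X^h)_h$ (guaranteed by the localization theorem, since $h$ acts on $N_h$ without the eigenvalue $1$), that $i_h$ is indeed representable so that covariance of $\euler$ applies, and that the compatibility of $t_h$ with the translation automorphism of $R(G)\otimes\CC$ genuinely sends the $h$-local piece to the $1$-local piece — all of which are asserted in the preceding subsections, so the proof is essentially a matter of chaining these facts in the right order.
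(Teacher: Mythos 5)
Your proposal is correct and follows exactly the route the paper takes: decompose $[V]$ over the finite support of $K_0(G,X)\otimes\CC$, reduce each piece to $X^h$ via the localization theorem, shift the $h$-local piece to the $1$-local piece with $t_h$ (using Proposition \ref{prop.thatsit} to preserve the Euler characteristic), and finish with Corollary \ref{cor.firststep}. The paper compresses this into the single sentence ``combining the localization theorem with Proposition \ref{prop.thatsit}\ldots,'' so your write-up is just a faithful expansion of the intended argument.
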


\subsubsection{Conclusion of the $\Pro(1,2)$ example.}
Since $K_0(\Pro(1,2)) = \ZZ[\xi]/(\xi^2-1)(\xi -1)$, we see that
$K$-theory is additively generated by the class $1, \xi, \xi^2$.
We use Theorem \ref{thm.diagrr} to compute
$\euler(\Pro(1,2),\xi^l)$.
First $$\euler(\Pro(1,2), \xi^l_1) = \int_{\Pro(1,2}\ch(\xi^2)\Td(\Pro(1,2))
= \int_{\Pro(1,2)} (1 + lt)(1 + 3/2t) = \int_{\Pro(1,2)}(l + 3/2)t = {(2l+3)\over{4}}.$$
Now we must calculate the contribution from the component supported at $-1$.
If we let $X = \A^2 \smallsetminus \{0\}$
then $X^{-1}$ is the linear subspace $\{(0,a)| a\neq 0\}$.
Because $\CC^*$ acts with weight 2 on $X^{-1}$ the stack $[X^{-1}/\CC^*]$
is isomorphic to the classifying stack $B\ZZ_2$ and  $K_{\CC^*}(X^{-1}) = \ZZ[\xi]/(\xi^2-1)$
while $\Chow^*_{\CC^*}(X^{-1}) = \ZZ[t]/2t$ where again $t = c_1(\xi)$ and 
$\int_{[X^{-1}/\CC^*]} 1 =1/2$. 
Using our formula we see that 
$$\euler(\Pro(1,2),\xi^l_{-1})  = \int_{[X^{-1}/\CC^*]} \ch\left(\frac{(-1)^l\xi^l}
{1 +\xi^{-1}}\right)\Td([X^{-1}/\CC^*]).$$
Since $c_1(\xi)$ is torsion, the only contribution to the integral on the
0-dimensional stack $[X^{-1}/\CC^*]$ is from the class $1$
and we see that
$\euler(\Pro(1,2), \xi^l_{-1}) = (-1)^l/4$, so we conclude that
$$\euler(\Pro(1,2), \xi^l) = {2l + 3 + (-1)^l\over{4}}.$$ In particular,
$\euler(\Pro(1,2), {\mathcal O}) = \euler(\Pro(1,2),L) = 1$. Note however
that  $\euler(\Pro(1,2), L^2)
= 2$.

\begin{exercise}
  You should be able to work things out for arbitrary weighted
  projective stacks.  The stack $\Pro(4,6)$ is known to be isomorphic
  to the stack of elliptic curve $\Mbar_{1,1}$ and so $K_0(\Mbar_{1,1})
  = \ZZ[\xi]/(\xi^4-1)(\xi^6 -1)$.  Hence $K_0(\Mbar_{1,1})$ is
  supported at $\pm 1, \pm i, \omega, \omega^{-1}, \eta, \eta^{-1}$
  where $\omega = e^{2\pi i /3}$ and $\eta = e^{2\pi i /6}$.  Use Theorem
  \ref{thm.diagrr} to compute $\euler(\Mbar_{1,1},\xi^k)$. This computes
  the dimension of the space of level one weight $k$-modular
  forms. The terms in the sum will be complex numbers but the total
  sum is of course integral.
\end{exercise}

\subsubsection{Example: The quotient stack $[(\Pro^2)^3/\ZZ_3]$} \label{sec.p3z3}
To further illustrate Theorem \ref{thm.diagrr} we consider
Hirzebruch-Riemann-Roch on the quotient stack $\ix =
[(\Pro^2)^3/\ZZ_3]$ where $\ZZ_3$ acts on $(\Pro^2)^3$ by cyclic
permutation. This example will serve as a warm-up for Section \ref{sec.p3s3}
where we consider the stack  $[(\Pro^2)^3/S_3]$.

Our goal is to compute $\euler(\ix, L)$ where $L = {\mathcal O}(m)
\boxtimes {\mathcal O}(m) \boxtimes {\mathcal O}(m)$ viewed as a 
$\ZZ_3$-equivariant line bundle on $(\Pro^2)^3$. To make this
computation we observe that $\Chow^*(\ix) =
\Chow^*_{\ZZ_3}((\Pro^2)^3)$ is generated by $\ZZ_3$ invariant cycles. It 
follows that every element 
$\Chow^*(\ix) \otimes \QQ$ is represented by
a symmetric polynomial (of degree at most 6) in the variables $H_1,H_2,H_3$,
where $H_i$ is the hyperplane class on the $i$-th copy of $\Pro^2$.

As before we have that
\begin{equation} \label{eq.firststep}
\euler(\ix, L_{1}) = \int_{\ix} \ch(L)\Td(\ix).
\end{equation}
Since $\ix \to (\Pro^2)^3$ is a $\ZZ_3$ covering we can identify
$T\ix$ with $T((\Pro^2)^3)$ viewed as $\ZZ_3$-equivariant vector bundle. Using
the standard formula for the Todd class of projective space we can rewrite 
equation \eqref{eq.firststep} as 
\begin{equation} \label{eq.secondstep}
\euler(\ix, L_{1}) = \int_{\ix} \prod_{i=1}^3 (1 + m H_i + m^2 H_i^2/2)(1 + 3H_i/2 + H_i^2).
\end{equation}
The only term  which contributes to the integral on the right-hand side of \eqref{eq.secondstep} is $(H_1H_2H_3)^2$. Now if
$P \in \Pro^3$ is any point then $(H_1H_2H_3)^2$ is represented by the invariant
cycle $[P \times P \times P]$. Since $\ZZ_3$ fixes this cycle we see that
$\int_{\ix} [P \times P \times P] = 1/3$ and conclude
that 
\begin{equation}
\euler(X, L_{1}) = 1/3 \left( \text{coefficient of }(H_1H_2H_3)^2\right).
\end{equation}
Expanding the product in \eqref{eq.secondstep} shows that
\begin{equation}
\euler(\ix, L_{1}) = 1/3\left(1 + 9 m/2 + 33 m^2/4 + 63 m^3/8 + 33 m^4/8 + 9 m^5/8 + m^6/8\right).
\end{equation}

Since $R(\ZZ_3) \otimes \CC = \CC[\xi]/(\xi^3 -1)$ we may identify
$\Spec R(\ZZ_3) \otimes \CC$ as the subgroup $\mu_3 \subset \CC^*$
and compute the contributions to $\euler(\ix, L)$ from the components of $L$
supported at $\omega  = e^{2\pi i/3}$ and $\omega^2$.

For both $\omega$ and $\omega^2$ the fixed locus of the corresponding
element of $\ZZ_3$ is the diagonal $\Delta_{(\Pro^2)^3}
\stackrel{\Delta} \hookrightarrow (\Pro^2)^3$. The group $\ZZ_3$ acts
trivially on the diagonal so $K_{\ZZ_3}(\Delta_{(\Pro^2)^3}) =
K_0(\Pro^2) \otimes R(\ZZ_3)$. Under this identification, the pullback
of the tangent bundle of $(\Pro^2)^3$ is $T\Pro^2 \otimes V$ where $V$
is the regular representation of $\ZZ_3$ corresponding to the action
of $\ZZ_3$ on a 3-dimensional vector space by cyclic permutation. Hence $$\Delta^*(T(\Pro^2)^3)) = T\Pro^2 \otimes {\mathbf 1} + T\Pro^2
\otimes \xi + T\Pro^2 \otimes \xi^2$$ where $\xi$ is the character
of $\ZZ_3$ with weight $\omega = e^{2\pi i/3}$.  The $\ZZ_3$-fixed
component of this $\ZZ_3$ equivariant bundle is the tangent bundle to
fixed locus $\Delta_{(\Pro^2)^3}$ and its complement is the normal
bundle. Thus $T{\Delta_{(\Pro^2)^3}} = T\Pro^2 \otimes {\mathbf 1}$ and
$N_{\Delta} = (T\Pro^2 \otimes \xi) + (T\Pro^2 \otimes
\xi^2)$. Computing the $K$-theoretic Euler characteristic gives:
\begin{eqnarray*}
\lambda_{-1}(N_{\Delta}^*) & = &\lambda_{-1}(T^*\Pro^2 \otimes \xi^2) \lambda_{-1}(T^*\Pro^2 \otimes \xi)\\
& = & (1  - T^*_{\Pro^2} \otimes \xi^2 +  K_{\Pro^2} \otimes \xi)
(1  - T^*_{\Pro^2} \otimes \xi + K_{\Pro^2} \otimes \xi^2).
\end{eqnarray*}
(Here we use the fact that $\xi^* = \xi^{-1} = \xi^2$ in $R(\ZZ_3)$.)
Because the above expression is symmetric in $\xi$ and $\xi^2$, applying the twisting operator for either $\omega$ or $\omega^2$ yields
$$
t(\lambda_{-1}(N_{\Delta}^*))  =  (1  - \omega^2 T^*\Pro^2 \otimes \xi^2 + 
\omega K_{\Pro^2} \otimes \xi)(1 - \omega T^*\Pro^2 \otimes \xi + \omega^2 K_{\Pro^2}).$$
Expanding the product in $K$-theory gives:
\begin{equation} \label{eq.lambda1}
t(\lambda_{-1})(N_{\Delta}^*) = 1 + K_{\Pro^2}^2 +
  (T^*\Pro^2)^2 - (T^*\Pro^2 - K_{\Pro^2} + T^*\Pro^2 K_{\Pro^2})
  \otimes (\omega \xi + \omega^2 \xi).
\end{equation}
Expression \eqref{eq.lambda1} simplifies after taking the Chern character because the Chern
classes of any representation are torsion. Precisely,
$$\ch(t(\lambda_{-1}(N_{\Delta}^*))) = 
9 - 27 H + 99 H^2/2.$$
where $H$ is the hyperplane class on $\Delta_{(\Pro^2)^3}$.
Also note that $\Delta^*L = {\mathcal O}(3m) \otimes {\mathbf 1}$ where
${\mathbf 1}$ denotes the trivial representation of $\ZZ_3$. Hence
$t(\Delta^*L) = \Delta^*L$ and
\begin{equation} \label{eq.z3fixed}
\begin{array}{lcl}
\euler(\ix, L_{\omega}) & = & \int_{[\Delta_{(\Pro^2)^3}/\ZZ_3]} \ch({\mathcal O}(3m) \ch(t(\lambda_{-1}(N_{\Delta}^*)^{-1} \Td(\Pro^2)\\
& = & 1/3\left(\text{ coefficient of }H^2\right)\\
& = & 1/3(1 + 3 m/2 + m^2/2)
\end{array}
\end{equation}
with the same answer for $\euler(\ix, L_{\omega^2})$.
Putting the pieces together we see
that 
\begin{equation}
  \euler(\ix, L) 
  = 1 + 5 m/2 + 37 m^2/12 + 21 m^3/8 + 11 m^4/8 + 
  3 m^5/8 + m^6/24.
\end{equation}
\begin{remark}
Note that we have quick consistency check for our computation - namely
that $\euler(\ix,L)$ is an integer-valued polynomial in $m$. The values 
of $\euler(\ix,L)$ for $m = 0,1,2,3$ are $1,11,76, 340$.
\end{remark}
 
\subsection{Hirzebruch Riemann-Roch for arbitrary quotient
  stacks} \label{sec.hzrrdmarb} We now turn to the general case of
quotient stacks $\ix = [X/G]$ with $X$ smooth and $G$ an arbitrary
linear algebraic group acting properly\footnote{Because we work in
  characteristic 0, the hypothesis that $G$ acts properly implies that
  the stabilizers are linearly reductive since they are finite. In
  addition every linear algebraic group over $\CC$ has a Levy
  decomposition $G = LU$ with $L$ reductive and $U$ unipotent and
  normal. If $G$ acts properly then $U$ necessarily acts freely
  because a complex unipotent group has no non-trivial finite
  subgroups. Thus, if we want,  we can quotient by the free action of $U$
  and reduce to the case that $G$ is reductive.}  on $X$. Again
$G_0(\ix) \otimes \CC$ is a module supported a finite number of closed
points of $\Spec R(G) \otimes \CC$. For a  general group $G$, 
$R(G) \otimes \CC$ is the coordinate ring of the quotient of $G$ by its
conjugation action.  As a result, points of $\Spec R(G) \otimes \CC$
are in bijective correspondence with conjugacy classes of semi-simple
(i.e. diagonalizable) elements in $G$. An element $\alpha \in
G_0(G,X)$ decomposes as $\alpha = \alpha_1 + \alpha_{\Psi_2}+
\ldots + \alpha_{\Psi_r}$ where $\alpha_{\Psi_r}$ is the component
supported at the maximal ideal corresponding to the semi-simple
conjugacy class $\Psi_r \subset G$. Moreover, if a conjugacy class
$\Psi$ is in $\Supp G_0(\ix) \otimes \CC$ then $\Psi$ consists of
elements of finite order.

By Corollary \ref{cor.firststep} if ${\mathcal X} = [X/G]$ is complete then
$\euler(\ix, \alpha_1) = \int_{{\mathcal X}} \ch(\alpha) \Td({\mathcal
  X})$. To understand what happens away from the identity we use a
non-abelian version of the  localization theorem proved in
\cite{EdGr:05}. Before we state the  theorem we need to introduce some
notation.  If $\Psi$ is a semi-simple conjugacy class let $S_\Psi =
\{(g,x) | gx = x, g\in\Psi\}$. The condition the $G$ acts properly on
$X$ implies that $S_{\Psi}$ is empty for all but finitely many $\Psi$ and the elements of these $\Psi$ have finite order. In addition,
if $S_\Psi$ is non-empty then the projection $S_\Psi \to X$ is a finite unramified
morphism. 
\begin{remark}
Note that the map $S_\Psi \to X$ need not be an embedding. For example if
$G =S_3$ acts on $X =\A^3$ by permuting coordinates and $\Psi$ is the conjugacy class of two-cycles, then $S_{\Psi}$ is the disjoint union of the three planes
$x=y$, $y=z$, $x=z$.
\end{remark}

If we fix an element $h \in \Psi$ then the map $G \times X^h \to
S_\Psi$, $(g, x) \mapsto (ghg^{-1}, gx)$ identifies $S_\Psi$ as the
quotient $G \times_{Z} X^h$ where $Z={\mathcal Z}_G(h)$ is the centralizer of the
semi-simple element $h \in G$. In particular $G_0(G, S_\Psi)$ can be
identified with $G_0(Z,X^h)$. The element $h$ is central in $Z$
and if $\beta \in G_0(G,S_\Psi)$ we denote by $\beta_{c_\Psi}$ the
component of $\beta$ supported at $h \in \Spec Z$ under the
identification described above. It is relatively straightforward 
\cite[Lemma 4.6]{EdGr:05} to
show that $\beta_{c_{\Psi}}$ is in fact independent of the choice of
representative element $h \in \psi$, and thus we obtain a
distinguished ``central'' summand $G_0(G,S_\Psi)_{c_\Psi}$ in
$G_0(G,S_\Psi)$.

\begin{theorem}[Non-abelian localization theorem] \cite[Theorem 5.1]{EdGr:05}
\label{thm.nonabelian} 
The pullback map $f^*_\Psi \colon G_0(G,X) \to G_0(G,S_\Psi)$ induces
an isomorphism between the localization of $G_0(G,X)$ at the maximal
ideal $m_\Psi \in \Spec R(G) \otimes \CC$ corresponding to the
conjugacy class $\Psi$ and the summand $G_0(G,S_\Psi)_{c_\Psi}$ in
$G_0(G,S_\Psi)$. Moreover, the Euler class of the normal bundle,
$\lambda_{-1}(N_{f_\Psi^*})$ is invertible in
$G_0(G,S_\Psi)_{c_\Psi}$ and if $\alpha \in G_0(G,X)_{m_\Psi}$ then
\begin{equation}
\alpha = f_{\Psi *}\left({f^*\alpha_{c_\Psi}\over{\lambda_{-1}(N_f^*)}}\right).
\end{equation}
\end{theorem}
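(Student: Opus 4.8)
The plan is to reduce this non-abelian statement to the classical $K$-theoretic localization theorem quoted in Section~\ref{sec.diaghzrr} by transporting everything to the centralizer of a fixed semisimple representative of $\Psi$. Fix $h \in \Psi$ and set $Z = \mathcal{Z}_G(h)$; since $\Psi$ lies in $\Supp G_0(\ix)\otimes\CC$, the element $h$ has finite order, so the group $D = \langle h\rangle$ it generates is finite (hence diagonalizable), central in $Z$, with $X^D = X^h$. As observed just before the statement, $(g,x) \mapsto (ghg^{-1}, gx)$ exhibits $S_\Psi$ as the associated bundle $G \times_Z X^h$; combined with the standard equivalence of categories of equivariant coherent sheaves $\mathrm{Coh}^G(G \times_Z X^h) \simeq \mathrm{Coh}^Z(X^h)$ this gives a natural isomorphism $G_0(G,S_\Psi) \cong G_0(Z,X^h)$ compatible with the $R(G)\otimes\CC \to R(Z)\otimes\CC$ module structures. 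Under it $f_\Psi^*$ becomes the composite
\[
G_0(G,X) \xrightarrow{\ \mathrm{res}^G_Z\ } G_0(Z,X) \xrightarrow{\ i_h^*\ } G_0(Z,X^h),
\]
where $i_h\colon X^h \hookrightarrow X$ is the inclusion of the (smooth, since $X$ is smooth) fixed locus, a regular embedding that is $Z$-equivariant because $h$ is central in $Z$; moreover $N_{f_\Psi}$ goes to the $Z$-equivariant normal bundle $N_h$ of $i_h$, and the central summand $G_0(G,S_\Psi)_{c_\Psi}$ goes to $G_0(Z,X^h)_h$, the localization of $G_0(Z,X^h)\otimes\CC$ at the maximal ideal of $R(Z)\otimes\CC$ cut out by $h$.

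So it suffices to establish two facts and then assemble. \emph{The fixed-point part:} $i_h^*$, localized at $h$ on both sides, is an isomorphism; $\lambda_{-1}(N_h^*)$ is invertible in $G_0(Z,X^h)_h$; and $\beta = (i_h)_*\bigl(i_h^*\beta/\lambda_{-1}(N_h^*)\bigr)$ holds there. This is the localization theorem of Section~\ref{sec.diaghzrr} applied to the diagonalizable group $D$ and refined to keep track of the commuting $Z$-action; invertibility of $\lambda_{-1}(N_h^*)$ holds because $N_h$, being the normal bundle to a fixed locus, contains no $h$-trivial subbundle, so each factor $\lambda_{-1}$ of a nontrivial $h$-eigenbundle of $N_h$ evaluates to a unit at $h$.

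\emph{The concentration part} --- and this is the main obstacle: $\mathrm{res}^G_Z$ should induce an isomorphism from $G_0(G,X)_{m_\Psi}$ onto $G_0(Z,X)_h$. Here $R(G)\otimes\CC$ and $R(Z)\otimes\CC$ are the rings of conjugation-invariant regular functions on $G$ and on $Z$, the induced morphism $\Spec R(Z)\otimes\CC \to \Spec R(G)\otimes\CC$ carries the point $h$ to $m_\Psi$, and $\mathrm{res}^G_Z$ is a map of modules over this ring homomorphism. The inverse, after localization, is built from the induction (transfer) homomorphism $\mathrm{ind}_Z^G\colon G_0(Z,X) \to G_0(G,X)$ together with the projection formula: the composites $\mathrm{res}^G_Z\circ\mathrm{ind}_Z^G$ and $\mathrm{ind}_Z^G\circ\mathrm{res}^G_Z$ are multiplication by $K$-theory classes built from $G/Z$ and from $\lambda_{-1}$ of the normal representation of $Z$ in $G$, and one checks these become units in the relevant localizations --- again because the ``non-central directions'' of $G/Z$ contribute $h$-eigenvalues $\neq 1$ at the point $h$. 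The delicate points are that the localized restriction lands in, and surjects onto, exactly the summand indexed by $h$ (other semisimple conjugacy classes of $Z$ can map to $m_\Psi$ and must be shown not to contribute) together with the unit checks just mentioned; this is the heart of \cite{EdGr:05}.

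Granting these, composing the isomorphisms gives $G_0(G,X)_{m_\Psi} \xrightarrow{\sim} G_0(Z,X^h)_h = G_0(G,S_\Psi)_{c_\Psi}$, which is the asserted isomorphism induced by $f_\Psi^*$. Transporting the self-intersection identity of the fixed-point part through $G_0(G,S_\Psi) = G_0(Z,X^h)$, and matching $(i_h)_*$ with $f_{\Psi *}$ and $N_h$ with $N_{f_\Psi}$, yields the invertibility of $\lambda_{-1}(N_{f_\Psi}^*)$ in $G_0(G,S_\Psi)_{c_\Psi}$ and the formula $\alpha = f_{\Psi *}\bigl(f^*\alpha_{c_\Psi}/\lambda_{-1}(N_f^*)\bigr)$ for all $\alpha \in G_0(G,X)_{m_\Psi}$, completing the proof.
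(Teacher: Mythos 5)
The paper itself offers no proof of Theorem \ref{thm.nonabelian}: it is quoted verbatim from \cite[Theorem 5.1]{EdGr:05}, and the surrounding text (the identification $S_\Psi \cong G\times_Z X^h$, the central summand, Corollary \ref{cor.nonabeliansinglement}) is exactly the translation to the centralizer that you carry out. So your outline reproduces the correct architecture --- Morita equivalence $G_0(G,S_\Psi)\cong G_0(Z,X^h)$, the abelian localization theorem for the finite cyclic group $\langle h\rangle$ acting with commuting $Z$-action to handle the fixed-locus part, and a comparison of $G_0(G,X)_{m_\Psi}$ with $G_0(Z,X)_h$ for the rest. But as a proof it stops exactly where the theorem becomes nontrivial: the ``concentration part'' is asserted, with the admission that it ``is the heart of \cite{EdGr:05}.'' Granting the conclusion of the cited theorem's hardest step is not a proof of the theorem.

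Moreover, the mechanism you sketch for that step does not work as stated when $\dim G>0$. You propose an induction/transfer $\mathrm{ind}_Z^G\colon G_0(Z,X)\to G_0(G,X)$ with $\mathrm{res}\circ\mathrm{ind}$ and $\mathrm{ind}\circ\mathrm{res}$ given by multiplication by classes built from $G/Z$, in the style of finite-group $K$-theory. But $G_0(Z,X)\cong G_0(G,G\times_Z X)$, and the natural map $G\times_Z X\to X$ is a fibration with fiber $G/Z$, which is affine and positive-dimensional, hence not proper; there is no pushforward along it in $G$-theory, so no transfer, and no Mackey-type double-coset formula to invert. This is precisely why \cite{EdGr:05} proceeds differently: the key inputs there are a vanishing theorem (the analogue of Thomason's result, showing $G_0(G,U)_{m_\Psi}=0$ for the $G$-invariant open complement $U=X\smallsetminus f_\Psi(S_\Psi)$, proved by Noetherian induction and reduction to the case of an induced/homogeneous action), which via the localization exact sequence gives surjectivity of $f_{\Psi*}$ after localizing, combined with the self-intersection formula $f_\Psi^*f_{\Psi*}\beta=\lambda_{-1}(N_{f_\Psi}^*)\cdot\beta + (\text{terms killed in the central summand})$ and the invertibility of $\lambda_{-1}(N_{f_\Psi}^*)$ in $G_0(G,S_\Psi)_{c_\Psi}$ (your eigenvalue argument for the latter is correct). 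One must also verify that the other semisimple classes of $Z$ lying over $m_\Psi$, and the non-central summands of $G_0(G,S_\Psi)$, genuinely do not contribute --- you flag this but do not address it. In short: right skeleton, but the load-bearing step is missing and the proposed substitute for it fails for positive-dimensional groups.
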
 
The theorem can be restated in way that is sometimes more useful for
calculations. Fix an element $h \in \Psi$ 
and again let $Z = {\mathcal Z}_G(h)$ be the centralizer of $h$ in $G$.
Let $\iota^! : G_0(G,X) \to G_0(Z,X^h)$
be the composition of the restriction of groups map $G_0(G,X) \to G_0(Z,X)$ with
the pullback $G_0(Z,X) \stackrel{i_h^*} \to
G_0(Z,X^h)$. Let $\beta_h$ denote the component of
$\beta \in G_0(Z,X^h)$ in the summand $G_0(Z,X^h)_{m_h}$. Let ${\mathfrak g}$ (resp. ${\mathfrak z}$) be the
adjoint representation of $G$ (resp. $Z$). The restriction of the adjoint
representation to the subgroup $Z$ makes ${\mathfrak g}$ a $Z$-module, so 
$\mathfrak{g/z}$ is a
$Z$-module. Since $S_\Psi = G\times_Z X^h$, under the identification 
$G_0(G,S_\Psi) = G_0(Z,X^h)$ the class of the 
conormal
bundle of the map $f_\Psi$ is identified with $N_{i_h}^* - \mathfrak{g/z}^*$.
Thus we can restate the non-abelian localization theorem as follows:
\begin{corollary} \label{cor.nonabeliansinglement} Let $\iota _!$ be
  the composite of $f_{\Psi *}$ with the isomorphism $
  G_0(Z,X^h) \to G_0(G,S_\Psi)$.  Then for $\alpha \in G_0(G,X)_{m_\Psi}$
\begin{equation}
\alpha = \iota_!\left(
{(\iota^!\alpha)_{h} \cdot  \lambda_{-1}(\mathfrak {g/z}^*)}\over{
\lambda_{-1}(N^*_{i_h})}\right).
\end{equation}
\end{corollary}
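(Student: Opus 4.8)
The plan is to deduce Corollary~\ref{cor.nonabeliansinglement} from Theorem~\ref{thm.nonabelian} simply by rewriting each ingredient of the formula under the identification $G_0(G,S_\Psi) \cong G_0(Z,X^h)$ given by $(g,x) \mapsto (ghg^{-1},gx)$, where $Z = {\mathcal Z}_G(h)$. First I would match up the pullback maps: I claim that under this identification the restriction map $f_\Psi^* \colon G_0(G,X) \to G_0(G,S_\Psi)$ becomes exactly the composite $\iota^! \colon G_0(G,X) \to G_0(Z,X) \xrightarrow{i_h^*} G_0(Z,X^h)$. This is a formal check on the level of equivariant sheaves: restricting a $G$-equivariant sheaf on $X$ along $S_\Psi = G\times_Z X^h \to X$ and then transporting along the isomorphism $G_0(G,S_\Psi) \cong G_0(Z,X^h)$ is the same as first restricting the $G$-action to $Z$ and then pulling back along the closed immersion $i_h\colon X^h \hookrightarrow X$. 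Next, the ``central'' summand $G_0(G,S_\Psi)_{c_\Psi}$ is by definition the component supported at $h \in \Spec R(Z)\otimes\CC$ under this identification, i.e. $G_0(Z,X^h)_{m_h}$, so $(\iota^!\alpha)_h$ is precisely $(f_\Psi^*\alpha)_{c_\Psi}$.

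The second ingredient to rewrite is the conormal bundle. The map $f_\Psi \colon S_\Psi \to X$ factors, after the identification, through the behaviour of $G \times_Z X^h \to G\times_Z X = X$, so its (co)normal bundle gets two contributions: the conormal bundle $N_{i_h}^*$ of the embedding $X^h \hookrightarrow X$ (with its $Z$-action), and a contribution from the ``group directions'' which works out to $-\mathfrak{g/z}^*$ in $K$-theory, since $S_\Psi \to X$ has relative tangent space $\mathfrak{g}/\mathfrak{z}$ along the $G$-orbit directions that collapse. I would establish this by a short tangent-space computation at a point $(h,x)$ with $x \in X^h$: the tangent space to $S_\Psi$ splits as $T_x X^h \oplus (\mathfrak{g}/\mathfrak{z})$ while the tangent space to $X$ is $T_x X$, so the virtual conormal class $N_{f_\Psi}^* = N_{i_h}^* - \mathfrak{g/z}^*$ in $G_0(Z,X^h)$. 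Consequently $\lambda_{-1}(N_{f_\Psi}^*) = \lambda_{-1}(N_{i_h}^*)\cdot \lambda_{-1}(\mathfrak{g/z}^*)^{-1}$ using multiplicativity of $\lambda_{-1}$ on short exact sequences (here $\lambda_{-1}(\mathfrak{g/z}^*)$ is invertible in the localized ring because $h$ acts on $\mathfrak{g/z}$ without nonzero fixed vectors — indeed $\mathfrak{g}^h = \mathfrak{z}$ since $Z$ is the centralizer). Substituting this into the formula of Theorem~\ref{thm.nonabelian} turns $f^*\alpha_{c_\Psi}/\lambda_{-1}(N_f^*)$ into $(\iota^!\alpha)_h \cdot \lambda_{-1}(\mathfrak{g/z}^*)/\lambda_{-1}(N_{i_h}^*)$, and pushing forward via $\iota_! := f_{\Psi*}$ composed with the identification gives exactly the claimed equation.

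The main obstacle I anticipate is the bookkeeping around the two identifications — the one used to define $\beta_{c_\Psi}$ and the one relating $f_\Psi^*$ to $\iota^!$ — and in particular verifying that the relevant supports correspond correctly, i.e. that localizing $G_0(G,X)$ at $m_\Psi$ matches localizing $G_0(Z,X^h)$ at $m_h$ compatibly with the translation-by-$h$ structure. Much of this is already packaged in \cite[Lemma 4.6]{EdGr:05} (independence of the choice of $h \in \Psi$), so I would lean on that and keep the argument here to a clean unwinding of Theorem~\ref{thm.nonabelian}. The computation of the conormal bundle contribution $\mathfrak{g/z}$ is the one genuinely geometric point, but it is standard and short once the quotient description $S_\Psi = G\times_Z X^h$ is in hand.
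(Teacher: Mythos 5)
Your proposal is correct and follows essentially the same route the paper takes: the paper derives the corollary in the paragraph immediately preceding its statement, by the same identification $G_0(G,S_\Psi)\cong G_0(Z,X^h)$ coming from $S_\Psi = G\times_Z X^h$, the same matching of the central summand with the localization at $m_h$, and the same identification $N_{f_\Psi}^* = N_{i_h}^* - \mathfrak{g/z}^*$ of the virtual conormal class, after which one substitutes into Theorem~\ref{thm.nonabelian}. Your added remarks (the tangent-space computation and the observation that $\mathfrak{g}^h=\mathfrak{z}$ makes $\lambda_{-1}(\mathfrak{g/z}^*)$ invertible in the localization) are accurate and merely make explicit what the paper leaves implicit.
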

The element $h \in Z(h)$ is central, and as in the abelian case
we obtain a twisting map $t_h \colon G_0(Z,X^h) \to G_0(Z,X^h)$
which maps the summand $G_0(Z,X^h)_h$ to the summand $G_0(Z,X^h)_1$ and also preserves invariants.

We can then obtain the Riemann-Roch theorem in the general case.
Let ${1_G}=\Psi_1, \ldots , \Psi_n$ be conjugacy classes corresponding to 
the support of $G_0(G,X)$ as an $R(G)$ module. Choose a representative element
$h_r \in \Psi_r$ for each $r$. Let $Z_r$ be the centralizer of $h$ in $G$
and let ${\mathfrak z}_r$ be its Lie algebra.
\begin{theorem} \label{thm.rrwithz}
Let $\ix = [X/G]$ be a smooth, complete Deligne-Mumford quotient stack.
Then for any vector bundle $V$ on $\ix$
\begin{equation}
\euler(\ix, V) = \sum_{r = 1}^n  \int_{[X^{h_r}/Z_r)]}\ch \left(
t_{h_r}(\frac{[i_r^*V] \cdot \lambda_{-1}({\mathfrak g^*}/{\mathfrak{z}_r^*})}
{\lambda_{-1}(N^*_{i_{r}})})\right)
\Td([X^{h_r}/Z_r])
\end{equation}
where $i_{r} \colon X^{h_r} \to X$ is the inclusion map.
\end{theorem}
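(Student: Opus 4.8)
The plan is to reduce the general statement to a sum of contributions, one for each point in the support of $G_0(G,X)\otimes\CC$, and then analyze each contribution separately using the non-abelian localization theorem (Theorem \ref{thm.nonabelian}) together with the twisting construction and Corollary \ref{cor.firststep}. First I would observe that, since $\ix=[X/G]$ is a complete Deligne-Mumford quotient stack, $G$ acts properly with finite stabilizers, so $G_0(G,X)\otimes\CC$ is supported at finitely many closed points $\Psi_1=1_G,\Psi_2,\dots,\Psi_n$ of $\Spec R(G)\otimes\CC$, each corresponding to a semisimple conjugacy class consisting of elements of finite order. Correspondingly the class $[V]\in K_0(G,X)\otimes\CC=G_0(G,X)\otimes\CC$ decomposes as $[V]=\sum_{r=1}^n [V]_{\Psi_r}$, and since the Euler characteristic map $\chi\colon G_0(\ix)\otimes\CC\to\CC$ is $\CC$-linear we get $\euler(\ix,V)=\sum_r \euler(\ix,[V]_{\Psi_r})$. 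It therefore suffices to prove, for each $r$, the identity
\begin{equation*}
\euler(\ix,[V]_{\Psi_r}) = \int_{[X^{h_r}/Z_r]}\ch\left(t_{h_r}\left(\frac{[i_r^*V]\cdot\lambda_{-1}(\mathfrak g^*/\mathfrak z_r^*)}{\lambda_{-1}(N_{i_r}^*)}\right)\right)\Td([X^{h_r}/Z_r]).
\end{equation*}

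Next I would handle the term $r=1$ (the augmentation ideal): here $h_1=1_G$, $X^{h_1}=X$, $Z_1=G$, $\mathfrak g/\mathfrak z_1=0$, $N_{i_1}=0$, and $t_{1_G}$ is the identity, so the right-hand side collapses to $\int_\ix\ch([V]_1)\Td(\ix)=\int_\ix\ch(V)\Td(\ix)$ (using that the Chern character only depends on the component at the augmentation ideal, by the last clause of Corollary \ref{cor.firststep}), which equals $\euler(\ix,V_1)$ precisely by Corollary \ref{cor.firststep}. For $r\geq 2$, I would apply the non-abelian localization theorem in the form of Corollary \ref{cor.nonabeliansinglement}: writing $Z=Z_r$, $h=h_r$, the component $[V]_{\Psi_r}\in G_0(G,X)_{m_{\Psi_r}}$ is the image under $\iota_!$ of $\dfrac{(\iota^![V])_{h}\cdot\lambda_{-1}(\mathfrak g^*/\mathfrak z^*)}{\lambda_{-1}(N_{i_h}^*)}$, a class in $G_0(Z,X^h)_h$. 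Since $\iota_!$ is built from the proper pushforward $f_{\Psi_r *}$ along $S_{\Psi_r}\to X$ composed with the identification $G_0(Z,X^h)=G_0(G,S_{\Psi_r})$, and the composition $[X^{h_r}/Z_r]\to[S_{\Psi_r}/G]\to\ix$ is a proper representable morphism of stacks, functoriality of $\euler$ under proper representable pushforward gives $\euler(\ix,[V]_{\Psi_r}) = \euler([X^{h_r}/Z_r], \gamma)$ where $\gamma$ is the above class supported at $h$.

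Now comes the twisting step: the element $h$ is central in $Z$ and acts trivially on $X^h$, so the twisting operator $t_h\colon G_0(Z,X^h)\otimes\CC\to G_0(Z,X^h)\otimes\CC$ is defined, carries the summand $G_0(Z,X^h)_h$ isomorphically onto $G_0(Z,X^h)_1$, and — by (the analogue for $Z$-linearized sheaves of) Proposition \ref{prop.thatsit}, valid because $[X^{h}/Z]$ is complete (being a closed substack of the complete stack, or rather an unramified cover of a closed substack of $\ix$) — preserves the Euler characteristic. Hence $\euler([X^{h}/Z],\gamma)=\euler([X^{h}/Z], t_h(\gamma))$, and $t_h(\gamma)$ now lives in the augmentation component $G_0(Z,X^h)_1$. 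Applying Corollary \ref{cor.firststep} again, this time to the stack $[X^{h}/Z]$, yields $\euler([X^h/Z], t_h(\gamma)) = \int_{[X^h/Z]}\ch(t_h(\gamma))\Td([X^h/Z])$, and substituting $\gamma = \dfrac{[i_h^*V]\cdot\lambda_{-1}(\mathfrak g^*/\mathfrak z^*)}{\lambda_{-1}(N_{i_h}^*)}$ (using $\iota^![V] = i_h^*(\mathrm{res}^G_Z[V])$ and that restriction of groups commutes with the localization) gives exactly the claimed $r$-th summand. Summing over $r$ finishes the proof.

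The main obstacle I anticipate is bookkeeping around the central localization and the independence-of-representative issue: one must be careful that $(\iota^!V)_h$, the component of the restricted-and-pulled-back class at the maximal ideal of $h$ in $R(Z)\otimes\CC$, matches the "central summand" $G_0(G,S_\Psi)_{c_\Psi}$ under the identification $G_0(G,S_\Psi)=G_0(Z,X^h)$, and that all the localizations and twisting maps are compatible with the translation automorphism $k\mapsto kh$ of $\Spec R(Z)\otimes\CC$. A second, more technical point is verifying that $[X^{h_r}/Z_r]$ is genuinely a complete Deligne-Mumford stack so that Corollary \ref{cor.firststep} and Proposition \ref{prop.thatsit} apply: completeness follows since $X^{h_r}\to X$ is finite unramified (as $S_{\Psi_r}\to X$ is finite and $[X^{h_r}/Z_r]$ maps onto the closed-and-open substack $[S_{\Psi_r}/G]\subseteq\ix$ via a representable proper map), and properness of the $Z_r$-action on $X^{h_r}$ is inherited from properness of the $G$-action on $X$. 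Modulo these verifications, the proof is a clean iteration of "localize, twist, apply Corollary \ref{cor.firststep}."
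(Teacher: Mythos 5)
Your proposal is correct and follows exactly the route the paper intends: decompose $[V]$ over the finite support of $G_0(G,X)\otimes\CC$, handle the identity component by Corollary \ref{cor.firststep}, and for each nontrivial conjugacy class combine Corollary \ref{cor.nonabeliansinglement} with the twisting operator $t_{h_r}$ (which preserves Euler characteristics by Proposition \ref{prop.thatsit}) and apply Corollary \ref{cor.firststep} again on $[X^{h_r}/Z_r]$. The paper leaves this assembly implicit, and your write-up supplies precisely the missing steps.
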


\subsubsection{A computation using Theorem \ref{thm.rrwithz}: The quotient
stack $[(\Pro^2)^3/S_3]$} \label{sec.p3s3}
We now generalize the calculation of Section \ref{sec.p3z3} to
the quotient $\iy =
[Y/S_3]$ where the symmetric group $S_3$ acts on $Y = (\Pro^2)^3$ by 
permutation.
Again we will compute $\euler(\iy, L)$ where $L = {\mathcal O}(m) \boxtimes
{\mathcal O}(m) \boxtimes {\mathcal O}(m)$ viewed as an $S_3$-equivariant
line bundle on $(\Pro^2)^3$. As was the case for the $\ZZ_3$ action
the $S_3$-equivariant rational Chow group is generated by symmetric polynomials
in $H_1,H_2,H_3$ where $H_i$ is the hyperplane class on the $i$-th copy of $\Pro^2$.
The calculation of $\euler(\iy, L_{1})$ is identical to the one we did 
for the stack $\ix = [(\Pro^2)/\ZZ_3]$ except that the cycle $[P \times P \times P]$ has stabilizer $S_3$ which has order 6. Thus,
\begin{equation}
  \euler(\ix, L_1) = 1/6\left(1 + 9 m/2 + 33 m^2/4 + 63 m^3/8 + 33 m^4/8 + 9 m^5)/8 + m^6/8\right).
\end{equation}
Now $\Spec R(S_3) \otimes \CC$ consists of 3 points, corresponding to
the conjugacy classes of $\{1\}$, $\Psi_2 = \{(12), (13), (23)\}$ and
$\Psi_3 = \{(123), (132) \}$.  We denote the components of $L$ at the
maximal ideal corresponding to $\Psi_2$ and $\Psi_3$ by $L_2$ and
$L_{3}$ respectively, so that $L = L_{1} + L_{2} + L_{3}$.

The computation of  $\euler(\iy,L_{3})$ is
identical to the computation of $\euler(\ix, L_{\omega})$ in Section
\ref{sec.p3z3}.  If we choose the representative element $\omega =
(123)$ in $\Psi_3$ then ${\mathcal Z}_{S_3}(\omega) = \langle \omega
\rangle = \ZZ_3$. Again $Y^\omega = \Delta_{(\Pro^2)^3}$ and the
tangent bundle to $(\Pro^2)^3$ restricts to the $\ZZ_3$-equivariant
bundle $T\Pro^2 \otimes V$ where $V$ is the regular
representation. Hence (see \eqref{eq.z3fixed})
\begin{equation}
\euler(\iy, L_{3}) = 1/3(1 + 3 m/2 + m^2/2)
\end{equation}

To compute $\euler(\iy, L_{2})$ choose the representative element $
\tau = (12)$ in the conjugacy class $\Psi = (12)$. Then ${\mathcal Z}_{S_3}
(\tau) =
\langle \tau \rangle = \ZZ_2$ and the fixed locus of $\tau$ is
$Y^{\tau} = \Delta_{(\Pro^2)^2} \times \Pro^2 \stackrel{\Delta_{12}}
\hookrightarrow (\Pro^2)^3$ where $\Delta_{(\Pro^2)^2} \subset (\Pro^2)^2$ is the
diagonal. The action of $\ZZ_2$ is trivial and the tangent bundle
to $(\Pro^2)^{3}$
restricts to $(T_{\Pro^2} \otimes V) \boxtimes T\Pro^2$ where $V$ is
now the regular representation of $\ZZ_2$ so $N_{\Delta_{12}} =
(T\Pro^2 \otimes \xi) \boxtimes T\Pro^2$ where $\xi$ is the
non-trivial character of $\ZZ_2$. Since $\xi$ is self-dual as a
character of $\ZZ_2$ we see that
\begin{equation}
\lambda_{-1}(N_{\Delta_{12}}^*)  =  (1  - (T^*\Pro^2 \otimes \xi) + K_{\Pro^2})
\end{equation}
Applying the twisting operator yields
\begin{equation}
t(\lambda_{-1}(N_{\Delta_{12}}^*) = (1 + T^*\Pro^2\otimes \xi  + K_{\Pro^2}) 
\end{equation}
Taking the Chern character we have 
$$\ch(t(\lambda_{-1}(N_{\Delta_{12}}^*))) = 4 - 6 H + 6 H^2$$
where $H$ is the hyperplane class on the diagonal $\Pro^2$.  The
restriction of $L$ to $Y^{\tau}$ is the line bundle $({\mathcal O}(2m)
\otimes {\mathbf 1}) \boxtimes {\mathcal O}(m)$. Thus,
\begin{eqnarray*}
  \euler(\ix, L_{2}) & = & \int_{[X^{\tau}/\ZZ_2]} \ch({\mathcal O}(2m) \boxtimes
{\mathcal O(m)} \ch(t(\lambda_{-1}(N_{\Delta}^*)^{-1} \Td(Y^\tau)\\
  & = & 1/2\left(\text{ coefficient of }H^2 H_3^2\right)\\
  & = & 1/2(1 + 3 m + 13 m^2/4 + 3 m^3/2 + m^4/4)
\end{eqnarray*}
Adding the Euler characteristics of $L_1, L_2,L_3$ gives
$$\euler(\iy, L) = 1 + 11 m/4 + 19 m^2/6 + 33 m^3/16 + 13 m^4/16 + 
 3 m^5/16 + m^6/48$$
which is again an integer-valued polynomial in $m$.
\subsubsection{Statement of the theorem in terms of the inertia stack}
The computation of $\euler(\ix, \alpha)$ does not depend on the choice of
the representatives of elements in the conjugacy classes and 
Theorem \ref{thm.rrwithz} can be restated in terms of the $S_\Psi$ and correspondingly in terms of the inertia stack.

\begin{definition}
  Let $IX = \{(g,x) | gx = x\} \subset G \times X$ be the inertia
  scheme.  The projection $IX \to X$ makes $IX$ into a group scheme
  over $X$. If the stack $[X/G]$ is separated then $IX$ is finite over
$X$.

The group $G$ acts on $IX$ by $g(h,x) = (ghg^{-1}, gx)$ and the projection
$IX \to X$ is $G$-equivariant with respect to this action. The quotient
stack $I\ix :=[IX/G]$ is called the {\em inertia stack} of the stack
$\ix = [X/G]$ and there is an induced morphism of stacks $I\ix \to \ix$.
Since $G$ acts properly on $X$ then the map $I \ix \to \ix$ is finite and unramified.
\end{definition}

Since $G$ acts with finite stabilizers a necessary condition for 
$(g,x)$ to be in $IX$ is for $g$ to be of finite order.
\begin{proposition}
  If $\Psi$ is a conjugacy class of finite order then $S_\Psi$ is
  closed and open in $IX$ and consequently there is a finite $G$-equivariant
  decomposition $IX = \coprod_{\Psi} S_\Psi$.
\end{proposition}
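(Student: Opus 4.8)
The plan is to exhibit $S_\Psi$ as the preimage in $IX$ of the closed subvariety $\Psi \subset G$ under the first projection, deduce from this that it is closed, and then obtain openness for free from the fact, already noted above, that properness of the action leaves only finitely many $S_\Psi$ nonempty.

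First I would set up the set-theoretic picture. Let $\pi\colon IX \to G$, $(g,x)\mapsto g$; this is equivariant for the conjugation action of $G$ on itself. Since $[X/G]$ is separated it is Deligne--Mumford with finite stabilizers, so every $(g,x)\in IX$ has $g$ lying in the finite stabilizer group $G_x$; hence $g$ has finite order and, as we work in characteristic $0$, is semisimple. Therefore $IX = \bigcup_\Psi \pi^{-1}(\Psi) = \bigcup_\Psi S_\Psi$, the union taken over all conjugacy classes $\Psi$ of elements of finite order, and the $S_\Psi$ are pairwise disjoint because distinct conjugacy classes are disjoint. Each $S_\Psi$ is $G$-stable because $\Psi$ is invariant under conjugation and $IX$ is $G$-stable, so this will be the asserted $G$-equivariant decomposition; applied to quotient stacks it gives $I\ix = \coprod_\Psi [S_\Psi/G]$.

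Next I would prove that $S_\Psi$ is closed in $IX$. First, $IX$ itself is closed in $G\times X$, being the preimage of the diagonal under $(g,x)\mapsto(gx,x)$ and $X$ being separated. Second, a conjugacy class in $G$ is closed precisely when its elements are semisimple, so $\Psi$ is closed in $G$ (if one prefers to avoid invoking this for a general linear algebraic group, one can first pass to the reductive case as in the footnote in Section~\ref{sec.hzrrdmarb}, where it is standard). Hence $\Psi\times X$ is closed in $G\times X$, and $S_\Psi = IX\cap(\Psi\times X)$ is closed.

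Finally openness: by the remark preceding the proposition, properness of the $G$-action on $X$ forces all but finitely many $S_\Psi$ to be empty; say $S_{\Psi_1},\dots,S_{\Psi_k}$ are the nonempty ones, so $IX = S_{\Psi_1}\sqcup\cdots\sqcup S_{\Psi_k}$ is a finite decomposition. Then $IX\setminus S_{\Psi_i}=\bigsqcup_{j\neq i}S_{\Psi_j}$ is a finite union of closed subsets, hence closed, so each $S_{\Psi_i}$ is also open in $IX$, which finishes the plan. The one step that is more than bookkeeping is the closedness of a semisimple conjugacy class in $G$; an alternative route, which also avoids appealing to the earlier finiteness statement, is to note that $IX$ is Noetherian (it is finite over $X$), that the closed subsets $\{(g,x)\in IX : g^n=e\}$ increase with $n$ and exhaust $IX$, whence $g^N=e$ identically on $IX$ for some fixed $N$, and that $\{g\in G : g^N=e\}$ meets only finitely many conjugacy classes in characteristic $0$, each of them closed since its elements are semisimple.
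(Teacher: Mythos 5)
The paper states this proposition without proof, so there is nothing to compare against; judged on its own, your argument has the right skeleton: closedness of each $S_\Psi$ plus finiteness of the set of nonempty $S_\Psi$ (which the paper asserts just before the proposition, as a consequence of properness of the action) immediately gives that each piece is open, and the set-theoretic decomposition and $G$-equivariance are handled correctly. You also correctly isolate the one genuinely nontrivial input, namely that $\Psi$ is closed in $G$.

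That input is where some care is still needed. The biconditional ``a conjugacy class in $G$ is closed precisely when its elements are semisimple'' is only a theorem for reductive $G$; for general linear algebraic groups the ``only if'' direction fails outright (in a unipotent group every conjugacy class is closed by Kostant--Rosenlicht, yet no nontrivial element is semisimple), and the ``if'' direction, which is the one you use, is not an off-the-shelf fact. Moreover, the reduction to the reductive case via the footnote in Section~\ref{sec.hzrrdmarb} is not quite a one-liner: writing $G=L\ltimes U$ with $U$ the unipotent radical, the $G$-class $\Psi$ is \emph{not} the preimage in $G$ of the corresponding $L$-class $\Psi_L$, so closedness of $\Psi_L$ in $L$ does not directly give closedness of $\Psi$ in $G$. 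What saves you is that you only need $S_\Psi$ closed in $IX$, and every point of $IX$ has first coordinate of finite order; in characteristic $0$ finite-order conjugacy classes of $G$ and of $L=G/U$ correspond bijectively (two finite-order elements with the same image in $L$ are $U$-conjugate, by Mostow's conjugacy of maximal reductive subgroups), so $S_\Psi$ is the full preimage of $S_{\Psi_L}$ under $IX\to I(X/U)$ and closedness does descend. Alternatively, one can prove directly that the class of a finite-order $s\in G\subset\GL_n$ is closed: its $\GL_n$-class is closed (fixed characteristic polynomial together with annihilation by a squarefree polynomial), so every point of the closure of the $G$-class is again semisimple and $\GL_n$-conjugate to $s$, and the Mostow argument then forces it to lie in the $G$-class. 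Your closing Noetherian alternative is fine in spirit (take $g^{n!}=e$ rather than $g^n=e$ to get an increasing chain), but it leans on the same ``each such class is closed'' assertion, so it does not actually avoid the issue. With the closedness of finite-order classes supplied by one of these routes, your proof is complete.
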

Since $IX$ has a $G$-equivariant decomposition into a finite disjoint
sum of the $S_\Psi$ we can define a twisting automorphism $t \colon
G_0(G,IX)\otimes \CC \to G_0(G,IX)\otimes \CC$ and thus a corresponding
twisting action on $G_0(I\ix)$. If $V$ is a $G$-equivariant vector
bundle on $IX$ then its fiber at a point $(h,x)$ is ${\mathcal
  Z}_G(h)$-module $V_{h,x}$ and $t(V)$ is the class in $G_0(G,IX)\otimes
\CC$ whose ``fiber'' at the point $(h,x)$ is the virtual ${\mathcal
  Z}_G(h)$-module $\oplus_{\xi \in X(H)} \xi(h) (V_{h,x})_{\xi}$ where
$H$ is the cyclic group generated by $h$.

The Hirzebruch-Riemann-Roch theorem can then be stated very concisely as: 
\begin{theorem}
Let $\ix = [X/G]$ be a smooth, complete Deligne-Mumford quotient stack
and let $f \colon I\ix \to \ix$ be the inertia map.
If $V$ is a vector bundle on $\ix$ then
$$\euler(\ix, V) = \int_{I\ix} \ch\left(t({f^*V\over{\lambda_{-1}(N_f^*)}})\right)\Td(I\ix)$$
\end{theorem}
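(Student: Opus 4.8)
The plan is to deduce the formula from Theorem \ref{thm.rrwithz} by reorganizing its sum over conjugacy classes into a single integral over the inertia stack. The key structural input is the $G$-equivariant decomposition $IX = \coprod_{\Psi} S_{\Psi}$ into the finitely many strata indexed by conjugacy classes $\Psi$ with $S_\Psi$ nonempty; as noted earlier these are exactly the classes $1_G = \Psi_1, \dots, \Psi_n$ indexing $\Supp G_0(\ix)$, so passing to quotient stacks yields $I\ix = \coprod_{r=1}^n [S_{\Psi_r}/G]$. Since $\ix$ is complete and $f\colon I\ix\to\ix$ is finite, $I\ix$ is complete, and the degree of its $0$-dimensional cycles is additive over this disjoint union; hence $\int_{I\ix}$ splits as $\sum_{r=1}^n \int_{[S_{\Psi_r}/G]}$. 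It therefore suffices to match, for each $r$, the $r$-th summand of the claimed right-hand side with the $r$-th summand of Theorem \ref{thm.rrwithz}.

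I would fix a representative $h_r\in\Psi_r$ with centralizer $Z_r = {\mathcal Z}_G(h_r)$ and Lie algebra ${\mathfrak z}_r$, and use the isomorphism $S_{\Psi_r}\cong G\times_{Z_r}X^{h_r}$ to identify $[S_{\Psi_r}/G]$ with $[X^{h_r}/Z_r]$ and $G_0(G,S_{\Psi_r})$ with $G_0(Z_r,X^{h_r})$. Three things must be checked under this identification. First, $I\ix$ restricted to the $r$-th component is literally $[X^{h_r}/Z_r]$, so $\Td(I\ix)$ restricts to $\Td([X^{h_r}/Z_r])$ and $f^*V$ restricts to $[i_r^*V]$, where $i_r\colon X^{h_r}\to X$ is the inclusion. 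Second, as recorded in the discussion preceding Corollary \ref{cor.nonabeliansinglement}, the conormal bundle of $f$ restricts to $N_{i_r}^* - {\mathfrak g}^*/{\mathfrak z}_r^*$; multiplicativity of $\lambda_{-1}$, together with invertibility of $\lambda_{-1}({\mathfrak g}^*/{\mathfrak z}_r^*)$ in the relevant localization, then gives
\begin{equation*}
\frac{f^*V}{\lambda_{-1}(N_f^*)}\Big|_{[X^{h_r}/Z_r]} \;=\; \frac{[i_r^*V]\cdot\lambda_{-1}({\mathfrak g}^*/{\mathfrak z}_r^*)}{\lambda_{-1}(N_{i_r}^*)}.
\end{equation*}
Third, the global twisting automorphism $t$ of $G_0(G,IX)\otimes\CC$ restricts on the summand $G_0(G,S_{\Psi_r})=G_0(Z_r,X^{h_r})$ to the operator $t_{h_r}$: this is immediate from the fiberwise description of $t$, whose value at a point $(h_r,x)$ weights each character eigenspace of the $Z_r$-module $V_{h_r,x}$ by the value of that character on $h_r$ — exactly the definition of $t_{h_r}$, which makes sense because $h_r$ is central in $Z_r$.

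Assembling the three identifications termwise converts the asserted right-hand side into $\sum_{r=1}^n\int_{[X^{h_r}/Z_r]}\ch(t_{h_r}([i_r^*V]\lambda_{-1}({\mathfrak g}^*/{\mathfrak z}_r^*)/\lambda_{-1}(N_{i_r}^*)))\Td([X^{h_r}/Z_r])$, which is precisely Theorem \ref{thm.rrwithz}; in particular the $r=1$ term, where $X^{h_1}=X$, $Z_1=G$, $N_{i_1}=0$, ${\mathfrak g}/{\mathfrak z}_1=0$ and $t_{1_G}=\id$, reproduces the identity-component contribution $\int_\ix\ch(V)\Td(\ix)$ of Corollary \ref{cor.firststep}. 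I expect the only genuine work to be in the middle step: tracking the virtual conormal bundle and the twisting operator faithfully through the identification $G_0(G,S_{\Psi_r})\cong G_0(Z_r,X^{h_r})$ and the localization at $h_r$. Everything else is formal, flowing from the decomposition $IX=\coprod_\Psi S_\Psi$ and additivity of degrees over disjoint unions.
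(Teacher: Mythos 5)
Your proposal is correct and follows essentially the same route as the paper, which presents this theorem as a direct restatement of Theorem \ref{thm.rrwithz} obtained from the decomposition $IX=\coprod_\Psi S_\Psi$ and the identification $G_0(G,S_\Psi)\cong G_0(Z_r,X^{h_r})$. You have simply made explicit the bookkeeping (additivity of $\int_{I\ix}$ over components, the identification of the virtual conormal bundle, and the restriction of $t$ to $t_{h_r}$) that the paper leaves to the reader.
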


\section{Grothendieck Riemann-Roch for proper morphisms of Deligne-Mumford  quotient stacks} \label{sec.dmgrr}
In the final section we state the
Grothendieck-Riemann-Roch theorem for arbitrary proper morphisms of 
quotient Deligne-Mumford stacks.

\subsection{Grothendieck-Riemann-Roch for proper morphisms to schemes and algebraic spaces}
The techniques used to prove the Hirzebruch-Riemann-Roch for proper Deligne-Mumford stacks actually yield a Grothendieck-Riemann-Roch 
result for arbitrary separated Deligne-Mumford stacks relative to map $\ix \to M$ where $M$ is the moduli space of the quotient stack $\ix = [X/G]$.

\begin{theorem} \label{thm.rrforquotients} \cite[Theorem 6.8]{EdGr:05}
Let $\ix = [X/G]$ be a smooth quotient stack with coarse moduli space
$p \colon \ix \to M$. Then the following diagram commutes:
$$\begin{array}{ccc}
G_0(\ix) & \stackrel{I\tau_{\ix}} \to & \Chow^*(I\ix) \otimes \CC\\
p_* \downarrow & & p_* \downarrow \\
G_0(M) & \stackrel{\tau_M} \to & \Chow^*(M) \otimes \CC
\end{array}.
$$
Here $I\tau_{\ix}$ is the isomorphism that sends the
class in $G_0(\ix)$ of a vector bundle $V$ to
$\ch\left(t({f^*V\over{\lambda_{-1}(N_f^*)}})\right)\Td(I\ix)$ and
$\tau_M$ is the Fulton-MacPherson Riemann-Roch isomorphism.
\end{theorem}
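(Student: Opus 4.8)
The plan is to verify the commutativity of the square piece by piece, using the decomposition of $G_0(\ix)\otimes\CC$ over the finitely many semisimple conjugacy classes $1_G=\Psi_1,\dots,\Psi_n$ in $\Supp G_0(\ix)$ and the corresponding decomposition $I\ix=\coprod_{r}[S_{\Psi_r}/G]$ of the inertia stack. Writing $V=V_1+\dots+V_n$ with $V_r$ the component in the summand $G_0(\ix)_{m_{\Psi_r}}$, I would first observe that the $[S_{\Psi_r}/G]$-part of $I\tau_\ix(V)$ depends only on $V_r$: the Chern character factors through the augmentation localization, so after the twist $t_{h_r}$ only the summand supported at the central class $c_{\Psi_r}$ of $f^*V|_{S_{\Psi_r}}$ survives, and by the non-abelian localization theorem (Theorem \ref{thm.nonabelian}) that summand is exactly the image of $V_r$. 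Hence $p_*(I\tau_\ix(V))=\sum_r p_*\bigl(I\tau_\ix(V_r)|_{[S_{\Psi_r}/G]}\bigr)$, while $\tau_M(p_*V)=\sum_r\tau_M(p_*V_r)$, and it suffices to match the two sums term by term.

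For $r=1$ (so $\Psi_1=1_G$ and $I\tau_\ix(\alpha)=\ch(\alpha)\Td(\ix)$, supported on the copy of $\ix$ inside $I\ix$) I would relativize the argument of Corollary \ref{cor.firststep}. Choose a finite surjective $G$-equivariant morphism $X'\to X$ with $G$ acting freely on $X'$, so that $\ix'=[X'/G]$ is an algebraic space and the induced morphism $q\colon\ix'\to M$ is finite; by Proposition \ref{cor.inthemiddle} the pushforward $G_0(\ix')\otimes\QQ\to G_0(\ix)_1$ is onto, so write $\alpha=g_*\beta$ for $g\colon\ix'\to\ix$. Then, using $p\circ g=q$, covariance of the Fulton--MacPherson transformation for the proper morphism $q$ of algebraic spaces (Theorem \ref{thm.fultrr}(a), in the form valid for algebraic spaces) together with covariance of the equivariant Riemann--Roch transformation for the proper representable morphism $g$ (Theorem \ref{thm.representableDMrr}) give
\[
\tau_M(p_*\alpha)=\tau_M(q_*\beta)=q_*\tau_{\ix'}(\beta)=p_*\bigl(g_*\tau_{\ix'}(\beta)\bigr)=p_*\bigl(\tau_\ix(\alpha)\bigr)=p_*\bigl(\ch(\alpha)\Td(\ix)\bigr),
\]
the last equality because $\ix$ is smooth. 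Note that this step uses only properness of the $G$-action, not completeness of $\ix$, which is why the theorem holds for arbitrary separated quotient stacks.

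For $r\ge2$, fix $h=h_r\in\Psi_r$ and set $Z={\mathcal Z}_G(h)$. By Corollary \ref{cor.nonabeliansinglement} we may write $\alpha=\iota_!(\eta)$, where $\eta=(\iota^!\alpha)_h\cdot\lambda_{-1}(\mathfrak g^*/\mathfrak z^*)/\lambda_{-1}(N^*_{i_h})\in G_0([X^h/Z])$ and $\iota_!$ is induced by $[X^h/Z]\xrightarrow{\sim}[S_{\Psi_r}/G]\hookrightarrow I\ix\xrightarrow{f}\ix$; put $\gamma=t_h(\eta)\in G_0([X^h/Z])_1$, so that the $[S_{\Psi_r}/G]$-part of $I\tau_\ix(\alpha)$ is the pushforward to $\ix$ of $\ch(\gamma)\Td([X^h/Z])$. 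The stack $[X^h/Z]$ has coarse moduli space $p_r\colon[X^h/Z]\to S_{\Psi_r}/G$, the induced map $\rho_r\colon S_{\Psi_r}/G\to M$ is finite because $S_{\Psi_r}\to X$ is, and $p_*\circ\iota_!=\rho_{r*}\circ p_{r*}$. Applying the already-established case $r=1$ to the stack $[X^h/Z]$ with finite moduli map $\rho_r\circ p_r\colon[X^h/Z]\to M$ (using covariance of $\tau_M$ along $\rho_r$) yields $\tau_M\bigl(\rho_{r*}p_{r*}\gamma\bigr)=\rho_{r*}p_{r*}\bigl(\ch(\gamma)\Td([X^h/Z])\bigr)$, i.e.\ $\tau_M(p_*\iota_!\gamma)=p_*\bigl(I\tau_\ix(\alpha)|_{[S_{\Psi_r}/G]}\bigr)$. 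Finally $p_*\iota_!\gamma=p_*\iota_!\eta=p_*\alpha$ by the relative form of Proposition \ref{prop.thatsit}: since $h$ acts trivially on $X^h$, the Keel--Mori local description of $p_r$ as an invariant-quotient map $[W/\Gamma]\to W/\Gamma$ with $h\in\Gamma$ central shows $p_{r*}\circ t_h=p_{r*}$, because $\Gamma$-invariants lie in the $\langle h\rangle$-weight-zero part of a fibre, which $t_h$ fixes. Summing over $r$ gives the commutativity of the square.

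The main obstacle is precisely this last point, the relative twist-invariance $p_{r*}\circ t_h=p_{r*}$. In Proposition \ref{prop.thatsit} one only needs the twist to preserve the single integer $\euler$, whereas here the twisting operator must be transparent to the full proper pushforward along the coarse-moduli morphism; this genuinely requires the \'etale-local structure of $\ix\to M$ from Keel--Mori, so that $p_*$ is locally ``take $\Gamma$-invariants''. Everything else is organizational: keeping track of the decomposition into conjugacy-class components and of the finitely many auxiliary finite morphisms $\rho_r$, and invoking the covariance of the equivariant and Fulton--MacPherson Riemann--Roch transformations already in hand.
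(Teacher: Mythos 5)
Your proposal is correct and follows essentially the route the paper intends: the paper defers the proof to \cite[Theorem 6.8]{EdGr:05}, and the argument there is exactly this relativization of the Hirzebruch--Riemann--Roch argument --- decompose over the conjugacy classes supporting $G_0(\ix)\otimes\CC$, handle the augmentation component by finite parametrization together with covariance of the equivariant and Fulton--MacPherson transformations, and reduce the remaining components to that case via non-abelian localization and the twisting operator. You have also correctly isolated the one genuinely new ingredient beyond the $\euler$-level statement of Proposition \ref{prop.thatsit}, namely the relative twist-invariance $p_{r*}\circ t_{h}=p_{r*}$ for the full pushforward to the coarse space, which is justified exactly as you indicate by the Keel--Mori \'etale-local description of the coarse moduli map as an invariant-quotient map.
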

\begin{remark}
If $\ix$ is satisfies the resolution property then every coherent sheaf on $\ix$
can be expressed as a linear combination of classes of vector bundles.
\end{remark}
Using the universal property of the coarse moduli space and the
covariance of the Riemann-Roch map for schemes and algebraic spaces we
obtain the following Corollary.
\begin{corollary}
  Let $\ix = [X/G]$ be a smooth quotient stack and let $\ix \to Z$ be
  a proper morphism to a scheme or algebraic space.  Then the following diagram commutes:
$$\begin{array}{ccc}
G_0(\ix) & \stackrel{I\tau_{\ix}} \to & \Chow^*(I\ix) \otimes \CC\\
p_* \downarrow & & p_* \downarrow \\
G_0(Z) & \stackrel{\tau_Z} \to & \Chow^*(Z) \otimes \CC.
\end{array}
$$
\end{corollary}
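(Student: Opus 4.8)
The plan is to reduce everything to Theorem~\ref{thm.rrforquotients} by factoring the given morphism through the coarse moduli space of $\ix$ and then splicing together two commuting squares. Write $p\colon\ix\to M$ for the coarse moduli space map (proper and surjective) and let $g\colon\ix\to Z$ denote the given proper morphism. Since $Z$ is an algebraic space, the universal property of the coarse moduli space produces a unique morphism $q\colon M\to Z$ with $g=q\circ p$.

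First I would check that $q$ is proper. It is of finite type and separated because $M$ and $Z$ are of finite type over $\CC$ and $M$ is a separated algebraic space, and it is universally closed by a standard descent of the universal closedness of $g=q\circ p$ along the surjection $p$: after any base change $Z'\to Z$, a closed subset $C$ of $M\times_Z Z'$ has the same image in $Z'$ as its preimage in $\ix\times_Z Z'$, and that image is closed since $g$ is universally closed. This is the only point in the argument with any stacky flavor, and it is routine; one could equally sidestep it by observing directly that the pushforward to $Z$ of any coherent sheaf on $\ix$ factors through $M$.

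With $g=q\circ p$ and $q$ proper in hand, the pushforwards factor as $g_*=q_*\circ p_*$ on $G_0$, and the right-hand vertical map on Chow groups is by construction the composite of the map $\Chow^*(I\ix)\otimes\CC\to\Chow^*(M)\otimes\CC$ of Theorem~\ref{thm.rrforquotients} with $q_*\colon\Chow^*(M)\otimes\CC\to\Chow^*(Z)\otimes\CC$. The upper square, relating $I\tau_{\ix}$, $\tau_M$ and $p_*$, commutes by Theorem~\ref{thm.rrforquotients}. The lower square, relating $\tau_M$, $\tau_Z$ and $q_*$, commutes by the covariance of the Fulton--MacPherson Riemann--Roch transformation for proper morphisms (property~(a) of Theorem~\ref{thm.fultrr}) applied to $q$; this is legitimate for the morphism $q$ of algebraic spaces by the remark following Theorem~\ref{thm.fultrr}, which extends that theorem from schemes to algebraic spaces via Chow's lemma, and extends to $\CC$-coefficients since $\tau$ is linear over any coefficient ring. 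Stacking the two squares gives
$$\tau_Z\circ g_* = \tau_Z\circ q_*\circ p_* = q_*\circ\tau_M\circ p_* = q_*\circ p_*\circ I\tau_{\ix} = g_*\circ I\tau_{\ix},$$
which is precisely the asserted commutativity. I do not anticipate a real obstacle: all the substantive content is already contained in Theorem~\ref{thm.rrforquotients}, and what remains is the bookkeeping of the factorization through $M$ together with the easy properness of $q$.
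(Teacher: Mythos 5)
Your argument is correct and is exactly the paper's: factor $\ix \to Z$ through the coarse moduli space via its universal property, apply Theorem~\ref{thm.rrforquotients} to the first square and the covariance of the Fulton--MacPherson transformation for (proper morphisms of) algebraic spaces to the second. The extra verification that $q$ is proper is a reasonable detail the paper leaves implicit.
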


\subsubsection{Example: The Todd class of a weighted projective space}

If $X$ is an arbitrary scheme we define the {\em Todd
  class}, $\td(X)$, of $X$ to be $\tau_X({\mathcal O}_X)$ where $\tau_X$ is
the Riemann-Roch map of Theorem \ref{thm.fultrr}.  If $X$ is smooth,
then $\td(X) = \Td(TX)$, and for arbitrary complete schemes
$\euler(X,V) = \int_X \ch(V) \td(X)$ for any vector bundle $V$ on $X$.

In this section we explain how to use Theorem \ref{thm.rrforquotients} to
give a formula for the Todd class of the singular weighted projective
space ${\mathbf P}(1,1,2)$. The method can be extended to any simplicial
toric variety, complete or not, \cite{EdGr:03}. (See also \cite{BrVe:97}
for a computation of the equivariant Todd class of complete toric varieties using other methods.)

The singular variety ${\bf P}(1,1,2)$ is the quotient of $X=\A^3 
\smallsetminus \{0\}$ where
$\CC^*$ acts with weights $(1,1,2)$. This variety is the coarse moduli space of the corresponding smooth stack $\Pro(1,1,2)$. A calculation similar to that of 
Section \ref{sec.supp} shows that  $K_0(\Pro(1,1,2)) = 
\ZZ[\xi]/(\xi-1)^2(\xi^2-1)$ and $\Chow^*(\Pro(1,1,2)) = \ZZ[t]/2t^3$ where $t=c_1(\xi)$. 

The stack $\Pro(1,1,2)$ is a toric Deligne-Mumford stack (in the sense
of \cite{BCS:05}) and the weighted projective space ${\bf P}(1,1,2)$
is the toric variety $X(\Sigma)$ where $\Sigma$ is the complete
2-dimensional fan with rays by $\rho_0 = (-1,-2), \rho_1 = (1,0),
\rho_2 = (0,1)$. This toric variety has an isolated singular point
$P_0$ corresponding to the cone $\sigma_{01}$ spanned by $\rho_0$ and
$\rho_1$.
$$\xymatrix{
 & &  & & & \\
& & \sigma_{02} & & & \\
&  &  &  & & \\
\sigma_{02} &  \ar[ddl]^{\rho_0} \ar[rrr]^{\rho_1} \ar[uuu]^{\rho_2} & & & &\\
& & \sigma_{01} & & & \\
(-1,-2)& & & & &
}
$$

Each ray determines a Weil divisor $D_{\rho_i}$ which is the image of
the fundamental class of the hyperplane $x_i =0$. With the given
action, $[x_0=0] = [x_1 =0] = t$ and $[x_2=0] = 2t$. Since the action of $\CC^*$
on $\A^3$ is free on the complement of a set of codimension
2, the pushforward defines an isomorphism of integral Chow groups
$\Chow^1(\Pro(1,1,2)) =
\Chow^1({\mathbf P}(1,1,2))$. Thus, $\Chow^1(X(\Sigma) = \ZZ$ and $D_{\rho_0}
\equiv D_{\rho_1}$ while $D_{\rho_2} \equiv 2 D_{\rho_0}$.  Also,
$\Chow^2(X(\Sigma)) = \ZZ$ is generated by the class of the singular point $P_0$ 
and 
$[P_0] = 2[P]$ for any non-singular point $P$. 

The tangent bundle to $\Pro(1,1,2)$ fits into the Euler sequence
$$0 \to {\mathbf 1} \to  2 \xi + \xi^2 \to T\Pro(1,1,2) \to 0$$
so $c_1(T\Pro(1,1,2) = 4t$
and $c_2(T\Pro(1,1,2) = 15t^2$. Thus 
$$\Td(\Pro(1,1,2)) = 1 +2t +  21/12 t^2.$$
Pushing forward to ${\mathbf P}(1,1,2)$ gives a contribution
of $1 + 2D_{\rho_0} + 21/24 P_0$ to $\td({\mathbf P}(1,1,2))$.

Now we must also consider the contribution coming from the fixed locus
of $(-1)$ acting on $\A^3 \smallsetminus \{0\}$. The fixed locus is the line
$x_0 = x_1=0$ and the normal bundle has $K$-theory class  $2\xi$.
After twisting by $-1$  we obtain a contribution of
\begin{equation} \label{eq.minuscontrib}
p_*\left[\ch\left({1\over{(1 + \xi^{-1})^2}}\right)\Td([X^{-1}/\CC^*])\right]
\end{equation}

Since $[X^{-1}/\CC^*]$
is 0-dimensional and has a generic stabilizer of order 2
we obtain an additional contribution of $1/2 \rk( 1/(1 + \xi^{-1})^2)[P_0]
= (1/2\times 1/4)[P_0]  = 1/8[P_0]$ to $\td({\mathbf P}(1,1,2))$.
Combining the two contributions we conclude that:
$$\td({\mathbf P}(1,1,2)) = 1 + 2D_{\rho_0} + [P_0]$$
in $\Chow^*({\bf P}(1,1,2)$.

\subsection{Grothendieck-Riemann-Roch for Deligne-Mumford quotient
  stacks}

Suppose that $\ix = [X/G]$ and $\iy = [Y/H]$ are smooth
Deligne-Mumford quotient stacks and $f \colon \ix \to \iy$ is a
proper, but not-necessarily representable morphism. The most general
Grothendieck-Riemann-Roch result we can write down is the following:
\begin{theorem} \label{thm.dmgrr}\cite{EdKr:12}
The  following diagram of Grothendieck groups and Chow groups commutes:
$$\begin{array}{ccc}
G_0(\ix) & \stackrel{I\tau_{\ix}} \to & \Chow^*(I\ix) \otimes \CC\\
f_* \downarrow & & f_* \downarrow \\
G_0(\iy) & \stackrel{I\tau_{\iy}} \to & \Chow^*(I\iy) \otimes \CC
\end{array}
$$
\end{theorem}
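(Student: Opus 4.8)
The plan is to reduce the general Grothendieck-Riemann-Roch statement for a proper morphism $f\colon\ix\to\iy$ of quotient Deligne-Mumford stacks to the case already established in Theorem \ref{thm.rrforquotients}, namely the case in which the target is a scheme or algebraic space, using the coarse moduli space maps of the two stacks. Write $p_{\ix}\colon\ix\to M_X$ and $p_{\iy}\colon\iy\to M_Y$ for the coarse moduli maps. The morphism $f$ descends to a proper morphism $\bar f\colon M_X\to M_Y$ of algebraic spaces, and the square with vertical arrows $p_{\ix},p_{\iy}$ and horizontal arrows $f,\bar f$ commutes. First I would assemble the large diagram obtained by stacking: the front face is the square to be proved, the back face is the analogous square with $M_X$, $M_Y$, $\Chow^*(M_X)$, $\Chow^*(M_Y)$ and the Fulton-MacPherson maps $\tau_{M_X}$, $\tau_{M_Y}$ (which commutes by Theorem \ref{thm.fultrr}(a), valid for algebraic spaces by the remark following it), and the side faces are the two instances of Theorem \ref{thm.rrforquotients} applied to $\ix$ and to $\iy$. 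On the $K$-theory side the vertical maps $p_{\ix *}\colon G_0(\ix)\to G_0(M_X)$ and $p_{\iy *}$ are the pushforwards; on the Chow side, one must be slightly careful and use the composite $\Chow^*(I\ix)\otimes\CC \xrightarrow{\pi_{\ix *}} \Chow^*(\ix)\otimes\CC \xrightarrow{p_{\ix *}} \Chow^*(M_X)\otimes\CC$ where $\pi_{\ix}\colon I\ix\to\ix$ is the inertia map, and similarly for $\iy$; recall from Section 3 that $p_{\ix *}\colon\Chow^*(\ix)\otimes\CC\to\Chow^*(M_X)\otimes\CC$ is an isomorphism.

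The key step is then a diagram chase. The two side faces commute by Theorem \ref{thm.rrforquotients}; the back face commutes by functoriality of the Fulton-MacPherson transformation; the two ``lid'' squares, relating $G_0(\ix)\to G_0(M_X)$ with $\Chow^*(I\ix)\otimes\CC\to\Chow^*(M_X)\otimes\CC$ via the inertia-pushforward composite, commute essentially by construction of $I\tau_{\ix}$ as stated in Theorem \ref{thm.rrforquotients} (that is, $p_{\ix *}\circ\pi_{\ix *}\circ I\tau_{\ix} = \tau_{M_X}\circ p_{\ix *}$ is exactly the content of that theorem). Since $p_{\iy *}\colon\Chow^*(I\iy)\otimes\CC\to\Chow^*(M_Y)\otimes\CC$ factors as an isomorphism $\Chow^*(\iy)\otimes\CC\to\Chow^*(M_Y)\otimes\CC$ preceded by $\pi_{\iy *}$, and since the analogous map on $K$-theory is surjective after tensoring with $\CC$ (localization at the augmentation ideal, as in Section 3, plus the finite-parametrization argument of Proposition \ref{cor.inthemiddle}), one can cancel $p_{\iy *}$ and $\pi_{\iy *}$ from the right to extract commutativity of the front face. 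The essential point is that the bottom horizontal composite $p_{\iy *}\circ\pi_{\iy *}\colon\Chow^*(I\iy)\otimes\CC\to\Chow^*(M_Y)\otimes\CC$ restricted to the image of $f_*\circ I\tau_{\ix}$ is injective: this holds because $\pi_{\iy *}$ is injective on the relevant summand (the ``identity-component'' contribution $\Chow^*(\iy)\hookrightarrow\Chow^*(I\iy)$ is a direct summand and $\pi_{\iy *}$ restricted there is an isomorphism onto $\Chow^*(\iy)$), and one must check that $f_*\circ I\tau_{\ix}$ lands in a piece on which $\pi_{\iy *}$ is injective — or, more robustly, reduce to that situation by an additional application of Theorem \ref{thm.fultrr}(a) after composing with a further proper map from $M_Y$ to a point, as in the proof of Theorem \ref{thm.diagrr}.

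The main obstacle I anticipate is the bookkeeping with the inertia stacks: unlike in Theorem \ref{thm.rrforquotients}, where the target is a scheme and hence $I M_Y = M_Y$, here both source and target carry nontrivial inertia, and the twisting operator $t$ and the Euler-class denominators $\lambda_{-1}(N_f^*)$ appear on both sides. One must verify that the inertia construction is functorial enough that $f_*\colon\Chow^*(I\ix)\otimes\CC\to\Chow^*(I\iy)\otimes\CC$ (induced by the map $If\colon I\ix\to I\iy$ covering $f$) is compatible with the twisted Todd transformations $I\tau_{\ix}$, $I\tau_{\iy}$. Concretely, one decomposes $\ix$ and $\iy$ according to the support of $K$-theory over $\Spec R(G)\otimes\CC$ and $\Spec R(H)\otimes\CC$, applies the non-abelian localization theorem (Theorem \ref{thm.nonabelian}) on each piece to reduce every Euler-characteristic or pushforward computation to the centralizer fixed loci $X^{h_r}$, $Y^{k_s}$ — where the inertia is, locally, trivial — and there invokes the scheme-level Grothendieck-Riemann-Roch together with Proposition \ref{prop.thatsit} to see that the twisting factors match up on both sides. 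This is the technical heart of \cite{EdKr:12}; the argument above is only a sketch of how the pieces of the present paper combine to give it, and I would refer to that reference for the full verification that the twisting is preserved under pushforward.
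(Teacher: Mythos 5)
Note first that the paper itself offers no proof of Theorem \ref{thm.dmgrr}: it is stated with a citation to \cite{EdKr:12} (a paper in preparation), and the remark immediately following it explicitly defers the argument, via the localization methods of \cite{EdGr:03, EdGr:05}, to that reference. So there is no in-paper proof to compare yours against; what can be judged is whether your sketch would succeed on its own.

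The central mechanism of your sketch --- stacking the desired square on the coarse-moduli square of Theorem \ref{thm.rrforquotients} and then ``cancelling'' $p_{\iy *}\circ \pi_{\iy *}$ from the right --- has a genuine gap, and it is exactly the point you flag but do not resolve. The composite $\Chow^*(I\iy)\otimes\CC \to \Chow^*(\iy)\otimes\CC \to \Chow^*(M_Y)\otimes\CC$ collapses all twisted sectors of $I\iy$ onto the untwisted one and therefore has a large kernel, while $f_*\circ I\tau_{\ix}$ genuinely lands in all sectors. For a concrete failure, take $\iy = BG$ with $G$ finite and nontrivial: $G_0(\iy)\otimes\CC \cong R(G)\otimes\CC$ and $\Chow^*(I\iy)\otimes\CC$ both have dimension equal to the number of conjugacy classes of $G$, and $I\tau_{\iy}$ is an isomorphism between them, whereas $\Chow^*(M_Y)\otimes\CC = \CC$; commutativity after pushing down to $M_Y$ pins down only one linear functional's worth of the identity to be proved, and composing further down to a point (your proposed fallback) discards even more information rather than restoring injectivity. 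The sector-by-sector matching under $f_*$ is precisely the content of the theorem and is invisible to the coarse-space square. Your final paragraph --- decompose over $\Spec R(G)\otimes\CC$ and $\Spec R(H)\otimes\CC$, apply the non-abelian localization theorem on each conjugacy class, and check that the twisting operators $t$ and the Euler-class denominators $\lambda_{-1}(N^*)$ on source and target differ by exactly the relative term supplied by Grothendieck--Riemann--Roch for the representable pieces --- is where the actual proof must live, but as written you defer all of it to \cite{EdKr:12}. Without that verification, what your argument establishes is only the strictly weaker statement already contained in Theorem \ref{thm.rrforquotients}.
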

\begin{remark}
  A proof of this result using the localization methods of
  \cite{EdGr:03, EdGr:05} will appear in \cite{EdKr:12}.  A version of
  this Theorem (which also holds in some prime characteristics) was
  proved by Bertrand Toen in \cite{Toe:99}.  However, in that paper
  the target of the Riemann-Roch map is not the Chow groups but rather
  a ``cohomology with coefficients in representations.''  Toen does
  not explicitly work with quotient stacks, but his hypothesis that
  the stack has the resolution property for coherent sheaves implies
  that the stack is a quotient stack.

In \cite{EdKr:12} we will also give a version of Grothendieck-Riemann-Roch for proper morphisms of arbitrary quotient stacks.
\end{remark}

\section{Appendix on $K$-theory and Chow groups}
In this section we recall some basic facts about $K$-theory and Chow groups both in the non-equivariant and equivariant settings.
For more detailed references see \cite{Ful:84, FuLa:85, Tho:87a, EdGr:98}.

\subsection{$K$-theory of schemes and algebraic spaces}

\begin{definition}
Let $X$ be an algebraic scheme. We denote by $G_0(X)$ the Grothendieck group
of coherent sheaves on $X$ and $K_0(X)$ the Grothendieck group of locally free sheaves; i.e vector bundles.
\end{definition}
There is a natural map $K_0(X) \to G_0(X)$ which is an isomorphism when
$X$ is a smooth scheme. The reason is that if $X$ is smooth every
coherent sheaf has a finite resolution by locally free sheaves. For a
proof see \cite[Appendix B8.3]{Ful:84}.

\begin{definition}
If $X \to Y$ is a proper morphism then there is a pushforward map
$f_* \colon G_0(X)\to G_0(Y)$ defined by 
$f_*[{\mathcal F}] = \sum_i (-1)^i [R^if_*{\mathcal F}]$. When $Y = \pt$,
then $G_0(Y) = \ZZ$ and $f_*({\mathcal F}) = \chi(X,{\mathcal F})$.
\end{definition}

The Grothendieck group $K_0(X)$ is a ring under tensor product and the
map $K_0(X) \otimes G_0(X) \to G_0(X)$, $([V],{\mathcal F}) \mapsto {\mathcal F} \otimes V$ makes $G_0(X)$ into a $K_0(X)$-module.
If $f \colon X \to Y$ is an arbitrary morphism of schemes then pullback 
of vector bundles defines a ring homomorphism
$f^* \colon K_0(Y) \to K_0(X)$.

When $f \colon X \to Y$ is proper, the pullback for vector bundles and
the pushforward for coherent sheaves are related by the projection
formula. Precisely, if $\alpha \in K_0(Y)$ and $\beta \in G_0(X)$ then
$$f_*(f^*\alpha \cdot \beta) = \alpha \cdot f_*\beta$$
in $G_0(Y)$.

There is large class of morphisms $X \stackrel{f} \to Y$, 
for which there are pullbacks $f^* \colon G_0(Y) \to G_0(X)$ and pushforwards
$f_* \colon K_0(X) \to K_0(Y)$. For example, if $f$ is flat, the assignment
$[{\mathcal F}] \mapsto [f^*{\mathcal F}]$ defines a pullback $f^* \colon 
G_0(Y) \to G_0(X)$.

Suppose that every coherent sheaf on $Y$ is the quotient
of a locally free sheaf (for example if $Y$
embeds into a smooth scheme). If $f \colon X \to Y$ is a regular embedding 
then the direct
image $f_*V$ of a locally free sheaf has a finite resolution $W_{.}$ by locally free
sheaves. Thus we may define a pushforward $f_* \colon K_0(X) \to K_0(Y)$ by
$f_*[V] = \sum_{i} (-1)^i [W_i]$ in this case. Also, if $X$ and $Y$ are smooth 
then there is a pushforward $f_* \colon K_0(X) \to K_0(Y)$. When
$X$ and $Y$ admit ample line bundles then there are pushforwards
$f_* \colon K_0(X) \to K_0(Y)$ for any proper morphism of finite Tor-dimension.

\begin{definition}
The Grothendieck ring $K_0(X)$ has an additional structure as a
$\lambda$-ring.  If $V$ is a vector bundle of rank $r$ set
$\lambda^k[V] = [\Lambda^kV]$. If $t$ is parameter define
$\lambda_t(V) = \sum_{k=0}^r \lambda^k[V] t^k \in K_0(X)[t]$ where $t$
is a parameter.  
The class $\lambda_{-1}(V^*) = 1 - [V^*] + [\Lambda^2 V^*]
+ \ldots + (-1)^r[\Lambda^rV^*]$ is called the {\em $K$-theoretic Euler
  class} of $V$.
\end{definition}

Although, $K_0(X)$ is simpler to define and is functorial for arbitrary
morphisms, it is actually much easier to prove results about the
Grothendieck group $G_0(X)$. The reason is that $G$-functor behaves
well with respect to localization. If $U \subset X$ is open with
complement $Z$ then there is an exact sequence
$$ G_0(Z) \to G_0(X) \to G_0(U) \to 0.$$

The definitions of $G_0(X)$ and $K_0(X)$ also extend to algebraic spaces
as does the basic functoriality of these groups. However, even if $X$
is a smooth algebraic space there is no result guaranteeing that $X$ satisfies the {\em
  resolution property} meaning that every coherent sheaf is the
quotient of a locally free sheaf.  Thus it is not possible to prove
that the natural map $K_0(X) \to G_0(X)$ is actually an isomorphism. (Note
however, that there no known examples of smooth separated algebraic
spaces where the resolution property provably fails, c.f. \cite{Tot:04}.)
In this case one can either replace $K_0(X)$ with the Grothendieck group
of perfect complexes or work exclusively with $G_0(X)$. 

\subsection{Chow groups of schemes and algebraic spaces}

\begin{definition}
  If $X$ is a scheme (which for simplicity we assume to be equi-dimensional)
we denote by $\Chow^i(X)$ the Chow group of
codimension  $i$-dimensional cycles modulo rational equivalence as in
  \cite{Ful:84} and we set $\Chow^*(X) =  \oplus_{i=0}^{\dim X} \Chow^i(X)$ . 
\end{definition}

As was the case for the Grothendieck group $G_0(X)$, if $f \colon X \to Y$
is proper then there is a pushforward
$f_* \colon \Chow^*(X)  \to \Chow^*(Y)$. The map is defined as follows:
\begin{definition}
If $Z \subset X$ is a closed subvariety let $W= f(Z)$ with its reduced scheme
structure
$$f_*[Z] = \left\{ \begin{array}{cc} [K(Z):K(W)] [W] & \text{if } 
\dim W = \dim Z\\      0 & \text{otherwise} \end{array}\right\}$$
where $K(Z)$ (resp. $K(W)$) is the function field of $Z$ (resp. $W$).

If $X$ is complete then we denote the pushforward map $\Chow^*X \to \Chow^*(\pt) = \ZZ$ by $\int_X$.

\end{definition}
Because we index our Chow groups by codimension, the map $f_*$ shifts
degrees. If $f \colon X \to Y$ has (pure) relative dimension $d$
then $f_*(\Chow^k(X)) \subset \Chow^{k+d}(Y)$.

There is again a large class of morphisms $X \stackrel{f} \to Y$ for
which there are pullbacks $f^* \colon \Chow^*(Y) \to \Chow^*(X)$. Some
of the most important examples are flat morphisms where the pullback
is defined by $f^*[Z] = [f^{-1}(Z)]$, regular embeddings and, more
generally, local complete intersection morphisms.

We again have a localization exact sequence which can be used for computation. 
If $U \subset X$ is open
with complement $Z$ then there is a short exact sequence
$$ \Chow^*(Z) \to \Chow^*(X) \to \Chow^*(U) \to 0$$

\begin{definition}
If $X$ is smooth (and separated) then the diagonal $ \Delta \colon X \to X \times X$ is a regular embedding. Pullback along the diagonal allows us to define
an intersection product on $\Chow^*(X)$ making it into a graded ring, called
the Chow ring.
If $[Z] \subset \Chow^k(X)$ and $[W] \subset \Chow^l(X)$ then 
we define $[Z] \cdot [W] = \Delta^*([Z\times W]) \in \Chow^{k+l}(X)$.
\end{definition}

Any morphism of smooth varieties is a local complete intersection
morphism, so if $f \colon X \to Y$ is a morphism of smooth varieties
then we have a pullback $f^* \colon \Chow^*Y \to \Chow^*X$ which is a
homomorphism of Chow rings.

The theory of Chow groups carries through completely to algebraic spaces
\cite[Section 6.1]{EdGr:98}.
\subsection{Chern classes and operations}
Associated to any vector bundle $V$ on a scheme $X$ are Chern classes
$c_i(V)$, $0 \leq i \leq \rk V$. Chern classes are defined as
operations on Chow groups. Specifically $c_i(V)$ defines a homomorphism 
$\Chow^kX \to \Chow^{k+i}X$, $\alpha \mapsto c_i(V)\alpha$, with $c_0$ 
taken to be the identity map and denoted by $1$.
Chern classes
are compatible with pullback in the following sense:
If $f \colon X \to Y$ is a morphism for which there is a pullback of
Chow groups then $c_i(f^*V)f^*\alpha = f^*(c_i(V) \alpha)$. 

Chern classes of a vector bundle $V$ may be viewed as elements of the
{\em operational Chow ring} $A^*X = \oplus_{i = 0}A^iX$ defined in
\cite[Definition 17.3]{Ful:84}. An element of $c \in A^iX$ is a
collection of homomorphisms $c \colon \Chow^*(X') \to \Chow^{*+k}(X')$
defined for any morphism of schemes $X' \to X$. These homomorphisms
should be compatible with pullbacks of
Chow groups and should also satisfy the projection formula $f_*(c \alpha) = c
f_*\alpha$ for any proper morphism of $X$-schemes $f\colon X'' \to X'$
and class $\alpha \in \Chow^*(X'')$. Composition of morphisms makes
$A^*X$ into a graded ring and it can be shown that $A^kX = 0$ for $k >
\dim X$.

If $X$ is smooth, then the map $A^*X \to \Chow^*X$, $c \mapsto c([X])$
is an isomorphism of rings where the product on $\Chow^*X$ is the
intersection product.  In particular, if $X$ is smooth then the Chern
class $c_i(V)$ is completely determined by $c_i(V)[X] \in \Chow^i(X)$
so in this way we may view $c_i(V)$ as an element of $\Chow^i(X)$.

The total Chern class $c(V)$ of a vector bundle is the sum
$\sum_{i=0}^{\rk V} c_i(V)$. Since $c_0 =1$ and $c_i(V)$ is nilpotent
for $i> 0$ the total Chern class $c(V)$ is invertible in $A^*X$. Also,
if
$0 \to V' \to V \to V'' \to 0$ is a short exact sequence of vector bundles
then $c(V) = c(V')c(V'')$, so the assignment
$[V] \mapsto c(V)$ defines a homomorphism from the Grothendieck group
$K_0(X)$ to the multiplicative group of units in $A^*X$.

\subsubsection{Splitting, Chern characters and Todd classes}
If $V$ is a vector bundle on a scheme $X$, then the {\em splitting
  construction} ensures that there is a scheme $X'$ and a smooth,
proper morphism $f \colon X' \to X$ such that $f^* \colon \Chow^*X \to
\Chow^*X'$ is injective and $f^*V$ has a filtration $0 = E_0 \subset
E_1 \subset \ldots E_r=f^*V$ such that the quotients $L_i =
E_i/E_{i-i}$ are line bundles. Thus $c(f^*V)$ factors as
$\prod_{i=1}^r (1 + c_1(L_i))$. The classes $\alpha_i = c_1(L_i)$ are
Chern roots of $V$ and any symmetric expression in the $\alpha_i$ is the
pullback from $\Chow^*X$ of a unique  expression in the Chern classes of $V$.

\begin{definition}
If $V$ is a vector bundle on $X$ with Chern roots $\alpha_1, \ldots \alpha_r
\in A^*X'$ for some $X' \to X$ then the {\em Chern character} of $V$
is the unique class $\ch(V)\in A^*X \otimes \QQ$ which pulls back to 
$\sum_{i=0}^r \exp(\alpha_i)$ in $A^*(X')\otimes \QQ$. (Here
$\exp$ is the exponential series.)

Likewise the {\em Todd class} of $V$ is the unique class $\Td(V) \in A^*X \otimes \QQ$
which pulls back to
$\prod_{i=0}^r {\alpha_i\over{1- \exp(-\alpha_i)}}$ in $A^*(X') \otimes \QQ$.

The Chern character can be expressed in terms of the Chern classes of $V$ as
\begin{equation} \label{eq.chernchar}
\ch(V) = \rk V + c_1 + (c_1^2 -c_2)/2 + \ldots 
\end{equation}
and the Todd class as
\begin{equation} \label{eq.toddclass}
\Td(V) = 1 + c_1/2 + (c_1^2 + c_2)/12 + \ldots 
\end{equation}
Because $A^k(X) = 0$ for $k > \dim X$ the series for $\ch(V)$ and
$\Td(X)$ terminate for any given scheme $X$ and vector bundle
$V$.
\end{definition}

If $V$ and $W$ are vector bundles on $X$ then $\ch(V\oplus W) = \ch(V) + \ch(W)$
and $\ch(V \otimes W) = \ch(V) \ch(W)$ so the Chern character defines a
homomorphism of rings
$\ch\colon K_0(X) \to A^*X \otimes \QQ.$
We also have that $\Td(V \oplus W) = \Td(V)\Td(W)$
so we obtain a homomorphism
$\Td \colon K_0(X) \to (A^*X \otimes \QQ)^{\star}$ from the additive 
Grothendieck group to the multiplicative group of units in $A^*X \otimes \QQ$.

When $X$ is smooth we interpret the target
of the Chern character and Todd class to be $\Chow^*X$.

\subsection{Equivariant $K$-theory and equivariant Chow groups}
We now turn to the equivariant analogues of Grothendieck and Chow groups.
\subsubsection{Equivariant $K$-theory}
Most of the material on equivariant $K$-theory can be found in \cite{Tho:87a}
while the material on equivariant Chow groups is in \cite{EdGr:98}.

\begin{definition}
Let $X$ be a scheme (or algebraic space) with the action of an algebraic group
$G$. In this case we define $K_0(G,X)$ to be the Grothendieck group of $G$-equivariant vector bundles and $G_0(G,X)$ to be the Grothendieck group of $G$-linearized coherent sheaves.
\end{definition}

As in the non-equivariant case there is pushforward of Grothendieck 
groups $G_0(G,X) \to G_0(G,Y)$ for any proper $G$-equivariant morphism. Similarly,
there is a pullback $K_0(G,Y) \to K_0(G,X)$ for any $G$-equivariant morphism
$X \to Y$. There are also pullbacks in $G$-theory for equivariant
regular embeddings and equivariant lci morphisms. There is also
a localization exact sequence associated to a $G$-invariant open set
$U$ with complement $Z$. 

The Grothendieck group $K_0(G,X)$ is a ring under tensor product and
$G_0(G,X)$ is a module for this ring. The equivariant Grothendieck ring
$K_0(G,\pt)$ is the representation ring $R(G)$ of $G$. Since every scheme maps
to a point, $R(G)$ acts on both $G_0(G,X)$ and $K_0(G,X)$ for any $G$-scheme $X$.
The $R(G)$-module structure on $G_0(G,X)$ plays a crucial role in the
Riemann-Roch theorem for Deligne-Mumford stacks.

If $V$ is a $G$-equivariant vector bundle then $\Lambda^kV$ has
a natural $G$-equivariant structure. This means that the wedge product defines
a  $\lambda$-ring structure on $K_0(G,X)$. In particular
we define the equivariant Euler class of a rank $r$  bundle
$V$ by the formula
$$\lambda_{-1}(V^*) = 1 - [V^*] + [\Lambda^2V^*] - \ldots + (-1)^r [\Lambda^rV^*].$$

Results of Thomason \cite[Lemmas 2.6, 2.10, 2.14]{Tho:87} imply that if $X$ is normal 
and quasi-projective or regular and separated
over the ground field (both of which implies that $X$ has the resolution property)
and $G$ acts on $X$
then $X$ has the $G$-equivariant resolution property. It follows that if
$X$ is a smooth $G$-variety then every $G$-linearized coherent sheaf has a finite resolution by $G$-equivariant vector bundles. Hence $K_0(G,X)$ and $G_0(G,X)$
may be identified if $X$ is a smooth scheme.

The Grothendieck groups $G_0(G,X)$ and $K_0(G,X)$ are naturally identified with
the corresponding Grothendieck groups of the categories of locally free and 
coherent sheaves on the quotient stack $\ix = [X/G]$.

\begin{remark}[Warning]
If $X$ is complete then there are pushforward maps $K_0(G,X) \to K_0(G, \pt)=R(G)$
and $G_0(G,X) \to K_0(G,\pt)= R(G)$ that associate to a vector bundle $V$
(resp. coherent sheaf ${\mathcal F}$) the virtual representation
$\sum (-1)^i H^i(X,V)$ (resp. $\sum (-1)^i H^{i}(X, {\mathcal F})$.). Although
$V$ may be viewed as a vector bundle on the quotient stack $\ix = [X/G]$
the virtual representation $\sum (-1)^i H^i(X,V)$ {\em is not} the 
Euler characteristic of $V$ as a vector bundle on $\ix$.
\end{remark}

\subsubsection{Equivariant Chow groups}
The definition of equivariant Chow groups requires more care and is
modeled on the Borel construction in equivariant cohomology.  If $G$
acts on $X$ then the $i$-th equivariant Chow group is defined as
$\Chow^i(X_G)$ where $X_G$ is any quotient of the form $(X \times
U)/G$ where $U$ is an open set in a representation ${\mathbf V}$ of
$G$ such that $G$ acts freely on $U$ and ${\mathbf V} \smallsetminus
U$ has codimension more than $i$.  In \cite{EdGr:98} it is shown that
such pairs $(U, {\mathbf V})$ exist for any algebraic group and that
the definition of $\Chow^i_G(X)$ is independent of the choice of $U$
and ${\mathbf V}$.

Because equivariant Chow groups are defined as Chow groups of certain schemes,
they enjoy all of the functoriality of ordinary Chow groups. 
In particular, if $X$ is smooth then pullback along the diagonal
defines an intersection product on 
$\Chow^*_G(X)$.

\begin{remark}
Intuitively an equivariant cycle may be viewed as a $G$-invariant cycle
on $X \times {\mathbf V}$ where ${\mathbf V}$ is some representation of
$G$. Because representations can have arbitrarily large dimension $\Chow^i(X)$
can be non-zero for all $i$.

If $G$ acts freely then a quotient $X/G$ exists as an algebraic space
and $\Chow^i_G(X) = \Chow^i(X/G)$.
More generally, if $G$ acts with finite stabilizers then elements of 
$\Chow^i_G(X) \otimes \QQ$ are represented by $G$-invariant cycles on $X$
and consequently $\Chow^i_G(X) = 0$ for $i > \dim X - \dim G$.
\end{remark}

As in the non-equivariant case, an equivariant vector bundle $V$ 
on a $G$-scheme 
defines Chern class operations $c_i(V)$ on $\Chow^*_G(X)$. The Chern
class naturally live in the equivariant operational Chow ring $A^*_G(X)$
and as in the non-equivariant case the map
$A^*_G(X) \to \Chow^*_G(X)$, $c \mapsto c[X]$ is a ring isomorphism if $X$
is smooth.

We can again define the Chern character and Todd class of a vector bundle
$V$. However, because $\Chow^i_G(X)$ can be non-zero for all $i$, 
the target of the Chern character and Todd class is the infinite
direct product $\Pi_{i=0}^\infty \Chow^i_G(X) \otimes \QQ$.

When $G$ acts on $X$ with finite stabilizers then $\Chow^i_G(X) \otimes \QQ$
is $0$ for $i > \dim X - \dim G$ so in this case the target of the Chern 
character and Todd class map is $\Chow^*_G(X)$.

\bibliographystyle{amsmath}
%\bibliography{refs}
\def\cprime{$'$}

\end{document}